\documentclass[3p]{elsarticle}
\usepackage{amsmath,amssymb,amsfonts}%
\usepackage{amsthm}%
\usepackage{mathrsfs}%
\usepackage{mathtools}

\usepackage{xcolor}%
\usepackage{lmodern}
\usepackage{hyperref}
\usepackage[nameinlink]{cleveref}
\usepackage{orcidlink}

\newcommand{\e}{\mathrm{e}}

\newcommand{\R}{\mathbb R}

\newcommand{\eps}{\varepsilon}

\renewcommand{\epsilon}{\varepsilon}

\crefname{ineq}{Inequality}{Inequalities}
\creflabelformat{ineq}{#2{(#1)}#3}
\crefname{assumption}{Assumption}{Assumptions}

\crefname{remark}{Rem.}{Remarks}
\crefname{lemma}{Lem.}{Lemmata}
\crefname{proposition}{Prop.}{Propositions}
\crefname{theorem}{Thm.}{Thms.}
\crefname{assumption}{Asm.}{Assumptions}
\crefname{definition}{Defn.}{Definitions}
\crefname{example}{Ex.}{Examples}
\crefname{section}{Sec.}{Sections}
\crefname{equation}{Eq.}{Eqs.}
\crefname{figure}{Fig.}{Figures}

\usepackage{color}
\definecolor{mygreen}{RGB}{28,172,0} 
\definecolor{mylilas}{RGB}{170,55,241}
\definecolor{faublue}{RGB}{0,56,101}
\definecolor{fauorange}{RGB}{201,147,19}
\definecolor{faured}{RGB}{141,20,41}
\definecolor{faucyan}{RGB}{0,177,235}
\definecolor{faugreen}{RGB}{0,122,93}
\definecolor{fau-nat-green}{RGB}{0,122,93}
\definecolor{faugray}{RGB}{152,164,174}

\definecolor{matlabred}{RGB}{162, 20, 47}
\definecolor{matlabgreen}{RGB}{119, 173, 48}
\definecolor{matlabcyan}{RGB}{77, 191, 239}
\definecolor{matlabblue}{RGB}{0, 114, 190}
\definecolor{matlabyellow}{RGB}{238, 178, 32}
\definecolor{matlaborange}{RGB}{218, 83, 25}
\definecolor{matlabpurple}{RGB}{126, 47, 142}

\definecolor{matlabred1}{RGB}{121.5000, 43.5000, 82.7500}
\definecolor{matlabred2}{RGB}{81.0000, 67.0000, 118.5000}
\definecolor{matlabred3}{RGB}{40.5000, 90.5000, 154.2500}

\definecolor{fgreen1}{RGB}{67, 176, 42}
\definecolor{fgreen2}{RGB}{44.66, 133, 64}
\definecolor{fgreen3}{RGB}{22.33, 90, 86}
\definecolor{fgreen4}{RGB}{0, 47, 108}

\definecolor{forange1}{RGB}{255, 184, 28}
\definecolor{forange2}{RGB}{241, 142, 32.5}
\definecolor{forange3}{RGB}{227.5, 100, 37}
\definecolor{forange4}{RGB}{213.75, 58, 41.5}
\definecolor{forange5}{RGB}{200, 16, 46}

\newtheorem{theorem}{Theorem}
\newtheorem{assumption}[theorem]{Assumption}%

\newtheorem{remark}{Remark}%

\newtheorem{definition}{Definition}%

\begin{document}

\title{The obstacle problem for scalar conservation laws with nonlocal dynamics}

\author[1]{Paulo
Amorim\,\orcidlink{0000-0001-5590-7409
}}
\ead{paulo.amorim@fgv.br}

\author[2]{Alexander Keimer\,\orcidlink{0000-0003-3825-5853}}
\ead{alexander.keimer@uni-rostock.de}

\author[3,4]{Lukas Pflug\,\orcidlink{0000-0001-8001-5832}}
\ead{lukas.pflug@fau.de}

\author[3,4]{Jakob Rodestock\,\orcidlink{0009-0007-5527-1341}}
\ead{jakob.w.rodestock@fau.de}

\affiliation[1]{organization={Escola de Matemática Aplicada, Fundação Getulio Vargas, FGV- EMAp},addressline={Praia de Botafogo 190}, city={Rio de Janeiro}, postcode={22250-900}, country={Brazil}}

\affiliation[2]{organization={Institute of Mathematics, Universität Rostock}, addressline={Ulmenstraße 69}, city={Rostock}, postcode={18057}, country={Germany}}

\affiliation[3]{organization={Department Mathematik, Friedrich-Alexander-Universität Erlangen-Nürnberg (FAU)}, addressline={Cauerstraße 11}, city={Erlangen}, postcode={91058}, country={Germany}}

\affiliation[4]{organization={Competence Center of Scientific Computing, Friedrich-Alexander-Universität Erlangen-Nürnberg (FAU)}, addressline={Martensstraße 5a}, city={Erlangen}, postcode={91058}, country={Germany}}

\begin{abstract}In this article, we present a method to find a solution to a one-dimensional nonlocal conservation law that respects a space-dependent mapping, referred to as the obstacle. This is achieved by generalizing existing results for the local conservation law: We consider a relaxation of the velocity, that explicitly depends on the obstacle. We prove existence of solutions to the relaxed problem and show that, as the relaxation mapping converges to a Heaviside-type function, the corresponding solutions converge to a weak solution of a discontinuous nonlocal conservation law. Moreover, we can characterize the limiting flux in several cases.

The paper concludes with a numerical study that illustrates the aforementioned convergence.
\end{abstract}

\begin{keyword}Conservation Laws, Nonlocal conservation laws, Obstacle Problem, Traffic Modeling
\MSC[2020]{35L65,76A30}
\end{keyword}

\maketitle

\section{Introduction}
The obstacle problem addresses the question of whether a solution of a partial differential equation (or a related quantity, like the flux of the PDE) or of a variational problem can be constrained by a prescribed function $o:\R^n \rightarrow \R$ beforehand, called \emph{the obstacle}. Its applications range from crowd and traffic control~\cite{1rodrigues,Andreianov2016,1garavello,1andreianov2,1dymski,1garavello2,Bayen2022,Benyahia2017,Marcellini2020} to membranes~\cite{Rodriguez-Aros2016-em}, finance~\cite{Dumitrescu2014,Perninge2025} and other fields. \\
For elliptic and parabolic PDEs, the problem has been studied extensively~\cite{FernandezReal2020,2bogelein,Lions1967,Caffarelli1977} (a small, non-exhaustive list). In contrast, we focus in this contribution on hyperbolic conservation laws in one dimension. More precisely, for $T>0$, $U \in C^2(\R)$, we consider
\begin{equation}\label{eq:scl}
\partial_t q(t, x) + \partial_x \big( q(t, x)U(q(t, x))\big) = 0, \quad (t, x)\in (0, T) \times \R.
\end{equation}
Such equations (and related variants) play a huge role in continuum mechanics and arise, for instance, in traffic flow~\cite{Greenberg1959} or gas dynamics~\cite{Lions1994}. For hyperbolic conservation laws, the theory of obstacle problems has developed only more recently. To the best of our knowledge, foundational results for the one-dimensional case were established by Lévi~\cite{1levi}. Since then, the theory has been extended in several directions: The obstacle has been enforced either through a source term~\cite{1amorim,Berthelin2003,Berthelin2003split}, an abstract projection argument~\cite{Santambrogio2018} or approximation~\cite{1colombo}. Furthermore, estimates on solutions and conincidence set have been derived in~\cite{1rodrigues2}. \\
In particular, Amorim et al.~\cite{obstacle} proposed enforcing the obstacle by multiplying the flux \(q\mapsto q\cdot U(q)\) in~\cref{eq:scl} by a penalty function $V_\varepsilon(o-q)$ (cf.~\cref{fig:V}), 
\begin{figure}
    \centering
    \includegraphics[width=0.5\linewidth]{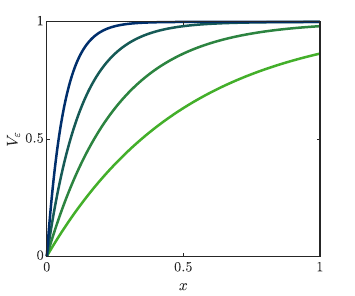}
    \caption{The mapping $V_\varepsilon(x) = 1 - \exp\left( - \tfrac{x}{\eps}\right)$ (where $x \in \R$) for $\eps = \textcolor{fgreen1}{2^{-1}}, \textcolor{fgreen2}{2^{-2}}, \textcolor{fgreen3}{2^{-3}}, \textcolor{fgreen4}{2^{-4}}$ relaxes the velocity when the obstacle is not adhered to.}
    \label{fig:V}
\end{figure}
which serves as a smooth approximation of the Heaviside function and is intended to ``slow down'' $q$ in front of the obstacle.\\
This approach has the advantage that it can be extended naturally to related models, such as the \emph{nonlocal scalar conservation law}
\begin{equation}
\begin{aligned} \notag   
\partial_t q(t, x) + \partial_x \left( q(t, x) U\big(W[q](t, x) \big)  \right) &= 0\quad  \text{ for } (t, x)\text{ in }(0, T) \times \R, \\
q(0, x) &= q_0(x) \quad \text{ for } x\text{ in } \R
\end{aligned}
\end{equation}
where \(q_{0}\in \mathrm{BV}(\R; \R_{\geq 0}),\) and $U:\R \rightarrow \R$ is a velocity function and 
\begin{equation}\label{eq:nlo}
    W[q](t, x) \coloneqq \int^{\infty}_x \gamma(y-x) q(t, y)~\mathrm{d}y\quad  \text{ for all }(t, x) \in (0, T) \times \R
\end{equation} and a kernel $\gamma \in L^1\big(\R_{\geq 0};\R_{\geq0}\big)$.\\
~\cref{eq:nocl} has the property of preserving the regularity of the initial datum $q_0$ and admitting a unique solution, even without an entropy condition~\cite{keimernonlocalbalance2017,Keimer2018bounded,Amorim2015nonlocalnumerical,Coclite2022BV,Kloeden2016,keimer2026bothsides}. This is not true for the local equation~\eqref{eq:scl}~\cite{Kruzkov1970,evanspartial}. The local and nonlocal models are related through the singular limit~\cite{Coclite2022SL,Amorim2015nonlocalnumerical,Crippa2021,Keimer2023disc,colombo2023overview,Colombo2023,Keimer2025,Colombo2019,Friedrich2024,dn25}. The nonlocal equation is a refined model for phenomena like traffic modeling, since the integral operator accounts for downstream density effects~\cite{Chiarello2018}. We will not examine, however, the role of the singular limit for the obstacle problem in this article.

The main objective of this paper is to investigate whether the penalization strategy introduced by Amorim et al.~\cite{obstacle} can be extended to the nonlocal equation. This results in a nonlinear-nonlocal partial differential equation~\cite{Chiarello2018,Chiarello2019stability,decourcel2025}. Very recently, existence and uniqueness of entropy solutions to this equation have been shown in a very general setting~\cite{keimerliverani26} using fixed point methods. However, we must use a different approach to prove existence, as we need specific estimates to account for the parameter $\varepsilon$ in $V_\varepsilon$.

We show that the arguments from~\cite{obstacle} can indeed be adapted to~\cref{eq:nocl}, provided that $U$ is decreasing and positive in the range of the obstacle, and $\gamma$ is decreasing and satisfies mild regularity assumptions.

More precisely, we prove that solutions $q_\varepsilon$ to the relaxed problem
\begin{align}
 \partial_t q_\varepsilon + \partial_x \left( V_\varepsilon\big(o(x)-q_\varepsilon\big) U\big(W[q_\varepsilon](t, x) q_\varepsilon \big)  \right)&= 0, &&(t, x)\in (0, T) \times \R, \label{eq:nocl}\\
  q(0, x)  &= q_0(x), && x \in \R, \label{eq:nocl1}
\end{align}
with the nonlocal operator as in \cref{eq:nlo} converge for an initial datum $q_0 \in \mathrm{BV}(\R)$ (along a subsequence) as $\varepsilon \searrow 0$ to a function $q \in C\big([0, T]; L^1_{loc}(\R)\big)$ that (still) obeys the obstacle, conserves mass, admits finite speed of propagation, and formally solves the discontinuous nonlocal conservation law
\begin{align*}
    \partial_t q(t,x)
    + \partial_x\big(
        \tilde{V}[q](t,x)\, q(t,x)\, U\bigl(W[q](t,x)\bigr)
      \big)
    &= 0,
    && (t,x) \in (0,T)\times \R, \\
    q(0,x)
    &= q_0(x),
    && x \in \R,
\end{align*}
where \(\tilde{V}[q](t,x)\in [0,1]\) for \((t,x)\in (0,T)\times\R\text{ a.e.}\) is a given \(L^{\infty}\) function. This formulation is in greater depth and rigor described in \cite{BULEK2011,Bulek2017}.
We also characterize explicitly the velocity profile in the case that the obstacle is active (otherwise, the velocity is just the nonlocal velocity \(U(W[q])\) given). The paper concludes with numerical simulations illustrating the qualitative behavior of the solutions and the effect of the obstacle.

\section{Preliminaries}
\noindent For $\eps\in \R_{>0}$, we consider the nonlinear-nonlocal problem~\cref{eq:nocl,eq:nocl1}
in $(0, T) \times \R$ with \textbf{initial condition} $q(0, \cdot) = q_0$. Here, $W$ is as in~\cref{eq:nlo}. We want to specify the other variables in
\begin{assumption}[Regularity of data]\label{as:as1}
    We assume that
    \begin{enumerate}
        \item[(U)]  the \textbf{velocity field} satisfies $U \in C^2\big(\R; \R\big)$, $U' \leq 0$ and there exists $U_{\min} \in \R_{>0}$ such that $U(x) \geq U_{\min}$ for all $x \in [-\lVert o \rVert_{L^\infty(\R)}-1, \lVert o \rVert_{L^\infty(\R)}+1]$,
       
        \item[(K)] the \textbf{kernel} $\gamma:\R_{\geq 0} \rightarrow \R$ is decreasing a.e., fulfills $\int_0^\infty \gamma(x)~\mathrm{d}x = 1$ and is of regularity $W^{2, 1}(\R_{\geq 0})$,

        \item[(O)] The \textbf{obstacle}~$o$ satisfies $o \in W^{2, \infty}(\R; \R_{>0})$, $o' \in W^{1, 1}(\R)$, $o_{\min} \coloneqq \inf_{y\in \R}o(y) \in \R_{>0}$
        
        and $\lim_{x \rightarrow \pm \infty}o(x)$ exist in $\R_{>0}$,

            \item[(I)] the \textbf{initial datum}~$q_0$ satisfies $q_0 \in \mathrm{BV}(\R; \R_{\geq 0})$.

        \item[(D)] and $o$ and $q_0$ are supposed to satisfy $d_o \coloneqq \underset{x \in \R}{\mathrm{essinf}}~ o(x)-q_0(x) > 0$.
    \end{enumerate}
    Furthermore, as a special case of~\cite[Assumption 1]{obstacle}, we set the \textbf{velocity relaxation factor} which is introduced in \cref{eq:nocl} to
    \begin{equation}\notag
        V_\varepsilon(x) \coloneqq 1-\exp\left(-\tfrac{x}{\varepsilon}\right)
    \end{equation}
    for all $x \in \R$. 
\end{assumption}

\begin{remark}[Justification of~\cref{as:as1} and implications]
\begin{enumerate}~
    \item[(i)] In the subsections, most notably in~\cref{thm:cp,thm:oslq,Thm:OSLV}, Oleinik-type estimates for compactness in $\eps$ that involve the quantities that were introduced in~\cref{as:as1} will be made that are very similar to what has been done~\cite{obstacle}. During these estimates, derivatives of $U$, $\gamma$ and $o$ up to order 2 will occur. This explains the high regularity and integrability assumptions. 
    \item[(ii)] For the reasons described in (i) (i.e., Oleinik-type estimates \cite{oleinik}), we need strict sign conditions on $U$ and $o$.
    \item[(iii)] The $\mathrm{BV}$-regularity is required to ensure that the viscosity approximation converges in a suitable topology~\cite{godlewski1991}.
    \item[(iv)] The condition (D) is important to ensure that the solution $q_{\nu,\eps}$ to~\cref{eq:nocl,eq:nocl1} strictly respects the obstacle for all times.

    \item[(v)] It holds that $\lim_{\eps\searrow 0} V_\eps(x) = 1$ for all $x \in \R_{>0}$ and $\lim_{\eps\searrow 0} V_\eps(0) = 0$. This means that $V_\eps$ is a smooth, monotone approximation of $\chi_{\R_{>0}}$ on $\R_{\geq 0}$.\\
    \item[(vi)] It holds that $\sup_{\eps > 0} \big\lVert V_\eps \big\rVert_{L^\infty(\R)} = 1$ and $\sup_{\eps > 0} \big \lVert V_\eps^{(k)} \big \rVert_{L^\infty(\R)} = \infty$ for $k \in \mathbb{N}$.
    \item[(vii)] Because of the comparison principle~\cref{thm:cp}, one can replace~\cref{as:as1} (U) by the assumption
    \begin{enumerate}
        \item[(U1)] $U \in W^{2, \infty}(\R)$, $U' \leq 0$ on $[-\lVert o \rVert_{L^\infty(\R)}-1, \lVert o \rVert_{L^\infty(\R)}+1]$ and there exists $U_{\min}\in \R_{>0}$ such that $U(x) \geq U_{\min}$ for all $x \in \R$,
    \end{enumerate}
    as $U$ will only be evaluated on a compact interval.
\end{enumerate}
    
\end{remark}
\section{Viscosity approximation and comparison principles}
We can derive several properties of the viscosity approximation. The latter allows for easier computations due to its smoothness. It will enable us to show one sided Lipschitz bounds that are crucial when letting $\eps \searrow 0$.
\subsection{Existence and uniqueness for the viscosity approximation}
Due to the combination of nonlinear, explicitly space-dependent flux and nonlocal impact, deriving existence and uniqueness for the equation is especially delicate.  We employ a fixed point argument similar to the one outlined in~\cite[p.~56, Lemma 2.1]{godlewski1991} and use some results for parabolic equations from the same book~\cite{godlewski1991}. First, we define:
\begin{definition}[Parabolic space]
    We set 
    \[
    \mathcal{W}(0, T) \coloneqq \left \lbrace v \in L^2\big((0, T); H^1(\R)\big) : \partial_t v \in L^2\big((0, T); L^2(\R) \big) \right \rbrace
    \]
   Remember that $\mathcal{W}(0, T) \hookrightarrow C\big([0, T]; L^2(\R)\big)$~\cite[p.54]{godlewski1991}.
\end{definition}
For this subsection, we set:
\begin{definition}[Viscosity approximation of~\cref{eq:nocl}]\label{def:va}
    Let~\cref{as:as1} hold and $\varphi_\nu$ be a standard mollifier~\cite[4.2.1 Notation (iii)]{evans}. A solution $q_{\nu, \eps} \in \mathcal{W}(0, T)$ in the sense of~\cite[p.~55]{godlewski1991} to the initial value problem for \((t,x)\in \in(0,T)\times\R\)
    \begin{align}
    \partial_t q_{\nu,\varepsilon}
    + \partial_x\Big(
        V_\varepsilon
            \big(o_\nu(x)-q_{\nu,\varepsilon}\big)
            q_{\nu,\varepsilon}
            U_\nu\bigl(W[q_{\nu,\varepsilon}](t,x)\bigr)
    \Big)
    \label{eq:vis1} 
    &=
    \nu\big(
        \partial_{xx}^2 q_{\nu,\varepsilon}
        - o_\nu''(x)
    \big)\\
    q_{\nu,\varepsilon}(0,\cdot)
    &\equiv q_{0,\nu}\coloneqq
    \bigl(q_0 * \varphi_\nu\bigr)\quad \text{on }\R
    \label{eq:vis3}
\end{align}
     is called \emph{viscosity approximation} of~\cref{eq:nocl,eq:nocl1}. Here,  $o_\nu \coloneqq o \ast \varphi_\nu$, $U_\nu \coloneqq \big( U\cdot \chi_{[-1, 1+\lVert o \rVert_{L^\infty(\R)}]}\big)*\varphi_\nu$ and, $W$ as in~\cref{eq:nlo}.\\
     The symbol ``$*$'' denotes a convolution in the sense of~\cite[4.13 (2)]{alt2016eng}.
\end{definition}

A weighted version of the viscosity approximation satisfies a similar parabolic partial differential equation:
\begin{remark}[Substitution]\label{rem:subs}
    Let $\nu, \eps \in \R_{>0}$ and let~\cref{as:as1} hold. If $q_{\nu, \eps} \in \mathcal{W}(0, T)$ solves~\cref{eq:vis1}, then the mapping $r_{\nu, \eps} \in \mathcal{W}(0, T)$ 
    \[
    r_{\nu, \eps}[\lambda](t, x) \coloneqq \exp(-\lambda t) q_{\nu, \eps}(t, x)
    \]
    for all $(t, x) \in (0, T) \times \R$, $\lambda \in \R_{>0}$ fulfills
    \begin{equation}
    \begin{aligned}
    \partial_t r_{\nu, \eps}[\lambda](t, x) &= - \lambda r_{\nu, \eps}[\lambda](t, x)   -  \partial_x\big(V_\eps(o_\nu(x)- \exp(\lambda t)r_{\nu, \eps}[\lambda](t, x))r_{\nu, \eps}[\lambda](t, x)\\
    & \qquad \cdot U_\nu\big(W[\exp(\lambda t)r_{\nu, \eps}[\lambda]](t, x) \big)\big)  + \nu\big(\partial_{xx}^2 r_{\nu, \eps}[\lambda](t, x) - \exp(-\lambda t) o_\nu''(x)\big) 
    \end{aligned}
     \label{eq:subs1} 
    \end{equation}
    in $(0, T) \times \R$ in a weak sense.
\end{remark}
This can be proved by plugging the formula into the weak formulation. This allows us to switch conveniently~\cref{eq:vis1} and~\cref{eq:subs1} for arbitrary $\lambda$. Now note:
\begin{theorem}[Solution operator for viscous problem is a contraction]\label{Thm:co}
    Let~\cref{as:as1} hold, $\nu, \eps, \lambda \in \R_{>0}$ be given and $F:L^2\big((0, T); L^2(\R)\big)$ $\rightarrow$ $L^2\big((0, T); L^2(\R)\big)$ be the operator that maps any $\varrho \in L^2\big((0, T); L^2(\R)\big)$ onto the unique weak solution $F[\varrho] \in \mathcal{W}(0, T)$ to the equation on \((0,T)\times\R\)
    \begin{align*}
     \partial_t r_{\nu, \eps}[\lambda]&=- \lambda r_{\nu, \eps}[\lambda] - \partial_x\Big(V_\eps(o_\nu(x)- \exp(\lambda t)r_{\nu, \eps}[\lambda])r_{\nu, \eps}[\lambda]\notag\\
    & \qquad\qquad\qquad\quad \cdot U_\nu\big(W[\exp(\lambda t)\varrho](t, x) \big)\Big) + \nu\big(\partial_{xx}^2 r_{\nu, \eps}[\lambda]- \exp(-\lambda t) o_\nu''(x)\big),\\
    r_{\nu, \eps}(0, \cdot) &\equiv q_{0, \nu},\qquad \text{ on } \R.
    \end{align*}
    Then, $F$ is well defined and a contraction for any finite \(T\in\R_{>0}\).
\end{theorem}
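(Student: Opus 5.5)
The plan is to work with $F$ directly in the unweighted norm of $L^2\big((0,T);L^2(\R)\big)$ and let the damping term $-\lambda r$ in the equation of \cref{rem:subs} produce the contraction, using only a plain $L^2$ energy estimate. There are two steps: well-definedness, then a contraction estimate for the difference of two images.

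\textbf{Well-definedness.} Fix $\varrho \in L^2\big((0,T);L^2(\R)\big)$. Since $\gamma\in L^1$, the convolution-type operator $W[\exp(\lambda t)\varrho]$ is measurable and $L^2$ in space. Because $U_\nu$ is a mollified function with compact support, $a(t,x)\coloneqq U_\nu\big(W[\exp(\lambda t)\varrho](t,x)\big)$ is bounded, with $\lVert a\rVert_{L^\infty}\le \lVert U\rVert_{L^\infty(\text{compact})}$. We also have $\lVert\partial_x a\rVert_{L^\infty}\le \lVert U_\nu'\rVert_{L^\infty}\,\lVert\partial_x W\rVert_{L^\infty}$, where $\partial_x W$ is controlled through $\gamma\in W^{2,1}$; if needed one uses only the weak form, where $\partial_x$ falls on the test function. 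With $\varrho$ frozen, the problem is a semilinear parabolic equation $\partial_t r-\nu\partial_{xx}^2 r+\lambda r=-\partial_x\big(g(t,x,r)\,a\big)+f$ with smooth, $x$-dependent nonlinearity
\[
g(t,x,r)=V_\eps\big(o_\nu(x)-\mathrm e^{\lambda t}r\big)\,r .
\]
Since $V_\eps$ is unbounded for negative arguments, $g$ is not globally Lipschitz. I would first establish a maximum principle, $0\le \mathrm e^{\lambda t}r\le \lVert o\rVert_{L^\infty}+1$. The argument is that $V_\eps$ vanishes at the obstacle, and the source $-\nu\,\mathrm e^{-\lambda t}o_\nu''$ is exactly the term that makes $\mathrm e^{-\lambda t}o_\nu$ a supersolution, combined with (D). Truncating $g$ outside this range then gives a globally Lipschitz flux with constant
\[
L\coloneqq\sup\big|\partial_r g\big|\le 1+\tfrac{1}{\eps}\,\mathrm e^{\cdot}\big(\lVert o\rVert_{L^\infty}+1\big)\cdot(\dots).
\]
The crucial point is that $\partial_r g=V_\eps-\mathrm e^{\lambda t}r\,V_\eps'/\!\cdot$ involves only $q=\mathrm e^{\lambda t}r$, so $L=L(\eps,o)$ is \emph{independent of $\lambda$}. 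Existence and uniqueness in $\mathcal W(0,T)$ then follow from \cite[p.~56, Lemma 2.1]{godlewski1991}.

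\textbf{Contraction estimate.} Let $w\coloneqq F[\varrho_1]-F[\varrho_2]$, write $r_i\coloneqq F[\varrho_i]$ and $a_i$ for the corresponding frozen velocities, so that $w(0)=0$. Test the difference equation with $w$ and split the flux difference as
\[
\big(g(r_1)-g(r_2)\big)a_1+g(r_2)\,(a_1-a_2).
\]
Integrating by parts and applying Young's inequality gives
\[
\tfrac12\tfrac{\mathrm d}{\mathrm dt}\lVert w\rVert^2+\lambda\lVert w\rVert^2+\nu\lVert \partial_x w\rVert^2\le \tfrac{\nu}{2}\lVert\partial_x w\rVert^2+\tfrac{1}{\nu}\Big(L^2\lVert a_1\rVert_\infty^2\lVert w\rVert^2+\lVert g(r_2)(a_1-a_2)\rVert^2\Big).
\]
For the last term, use $|g(r_2)|\le C\,|r_2|\le C\,\mathrm e^{-\lambda t}$ (maximum principle) together with $|a_1-a_2|\le \lVert U_\nu'\rVert_\infty\,\mathrm e^{\lambda t}\,\big|W[\varrho_1-\varrho_2]\big|$ and Young's convolution inequality $\lVert W[\cdot]\rVert_{L^2}\le\lVert\gamma\rVert_{L^1}\lVert\cdot\rVert_{L^2}$. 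The factors $\mathrm e^{\pm\lambda t}$ cancel, so this term is bounded by $C_2\lVert\varrho_1-\varrho_2\rVert^2$ with $C_2$ independent of $\lambda$ and $T$.

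\textbf{Conclusion and main obstacle.} Dropping the derivative term and integrating over $(0,T)$ with $w(0)=0$ gives
\[
\Big(2\lambda-\tfrac{2C_1}{\nu}\Big)\lVert w\rVert^2_{L^2L^2}\le \tfrac{2C_2}{\nu}\lVert\varrho_1-\varrho_2\rVert^2_{L^2L^2}.
\]
Hence $F$ is Lipschitz with constant $\big(C_2/(\nu\lambda-C_1)\big)^{1/2}$, uniformly in $T$, and this is a contraction in the unweighted norm on every finite interval $(0,T)$. The main obstacle is this $\lambda$-bookkeeping. I must verify carefully that neither $L$ nor $C_2$ picks up a factor $\mathrm e^{\lambda T}$, which rests entirely on the maximum principle bounding $\mathrm e^{\lambda t}r$. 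The estimate as planned yields a contraction only once $\lambda>(C_1+C_2)/\nu$, so the argument establishes the statement for all sufficiently large $\lambda$, not literally for every $\lambda\in\R_{>0}$. I would present it with $\lambda$ fixed above this threshold, noting that the threshold is independent of $T$. I would then check whether the constants can be sharpened enough to close the remaining gap; as planned they cannot.
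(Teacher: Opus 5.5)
Your proposal is correct and follows the same route as the paper. Both use an energy estimate for the difference of two images in the style of Godlewski--Raviart, splitting the flux difference into a part Lipschitz in $r$ and a part carrying the $U_\nu(W[\cdot])$-difference, and both let the damping $-\lambda r$ give a contraction only for sufficiently large $\lambda$, which is also all the paper's proof delivers. Your maximum-principle and $\e^{\pm\lambda t}$ bookkeeping supplies a step the paper glosses over: the paper simply asserts that $u\mapsto V_\eps(g(x)-u)u$ has bounded derivative on all of $\R$, which fails because $V_\eps$ is unbounded on $\R_{<0}$.

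One small correction: $0\le \e^{\lambda t}r$ is not a valid lower bound, because the source $-\nu o_\nu''$ has no sign (compare the negative lower bound in \cref{thm:cp}). You do not need it. Below the obstacle, $g$ is globally Lipschitz with a $\lambda$-independent constant and $\lvert g(r_2)\rvert\le\lvert r_2\rvert$. The cross term can then be handled with $\lVert a_1-a_2\rVert_{L^\infty(\R)}\le\lVert U_\nu'\rVert_{L^\infty(\R)}\e^{\lambda t}\lVert\gamma\rVert_{L^2(\R_{\geq 0})}\lVert(\varrho_1-\varrho_2)(t,\cdot)\rVert_{L^2(\R)}$ together with a $\lambda$-independent $L^2$ bound on $\e^{\lambda t}r_2$. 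That bound depends on $T$, so your threshold for $\lambda$ is then $T$-dependent rather than uniform in $T$.
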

\begin{proof}
        First, we know that
        \[
        \mathrm{supp}(U_\nu) \subseteq \big[ -1-\nu, \lVert o \rVert_{L^\infty(\R)}+1+\nu \big]
        \]
        by definition of $U_\nu$ (cf.~\cref{def:va}). The maximum principle will later justify the choice of the support. 
        \\ 
        We observe that $h_g: \R \times \R \rightarrow \R$,
        \[
        h_g(x, u) \coloneqq V_\eps(g(x)-u\big)u\quad \forall (x, u) \in \R \times \R
        \]
        defines a bounded mapping that is smooth in $u$ for every $g\in L^\infty(\R)$. Moreover,
        \begin{align}
            \sup_{(x, u) \in \R \times \R} \left \lvert \partial_u h_g(x, u) \right \rvert \leq \lVert V_\eps \rVert_{L^\infty(\R)} +  \sup_{(t, x, u) \in (0, T) \times \R \times \R} \left \lvert V_\eps'(g(x)-u)u \right \rvert < \infty.
        \end{align}
         An argument outlined in~\cite[Theorem 4]{obstacle} can be used to conclude that the problem~\cref{eq:subs1} admits a unique solution in $\mathcal{W}(0, T)$, which means that $F$ is well-defined.
         
         To prove that $F$ is a contraction however, we argue like in~\cite[Lemma 2.1., p.~56]{godlewski1991}: Take $\varrho, \hat{\varrho} \in L^2\big((0, T); L^2(\R)\big)$, set $r \coloneqq F[\varrho]$, $\hat{r} \coloneqq F[\hat{\varrho}]$ and take a look at ($t \in (0, T)$):
        \begin{align*}
            & \left \langle \partial_t \big( r-\hat{r} \big)(t, \cdot), \big(r-\hat{r}\big)(t, \cdot) \right \rangle_{H^{-1}(\R)\times  H^1(\R)}.
            \intertext{We plug in the weak formulation of~\cref{eq:subs1} and using Young's convolution inequality~\cite[(4-24)]{alt2016eng} to reach after some computation:}
            & \leq -\lambda \int_{\R} \lvert r(t, x) - \hat{r}(t, x) \rvert^2~\mathrm{d}x\\
            & \quad + \tfrac{1}{4\nu} \lVert g_o \rVert^2_{L^\infty(\R)} \lVert U_\nu' \rVert_{L^\infty(\R)}^2 \lVert \gamma \rVert_{L^1(\R_{\geq 0})} \int_{\R} \lvert \varrho(t, x) - \hat{\varrho}(t, x) \rvert^2~\mathrm{d}x \\
            & \quad + \tfrac{1}{4\nu}  \lVert g_o' \rVert_{L^\infty(\R)}^2 \lVert U_\nu \rVert_{L^\infty(\R)}^2 \int_{\R} \lvert r(t, x) - \hat{r}(t, x) \rvert^2~\mathrm{d}x.
        \end{align*}
        Integrating and rearranging yields
        \[
        \lVert r - \hat{r}\rVert_{L^2((0, T); L^2(\R))} \leq \tfrac{\frac{1}{4\nu} \lVert g_o \rVert^2_{L^\infty(\R)} \lVert U_\nu' \rVert_{L^\infty(\R)}^2 \lVert \gamma \rVert_{L^1(\R_{\geq 0})}}{\lambda - \frac{1}{4\nu}  \lVert g_o' \rVert_{L^\infty(\R)}^2 \lVert U_\nu \rVert_{L^\infty(\R)}^2}\cdot  \lVert \varrho - \hat{\varrho} \rVert^2_{L^2((0, T); L^2(\R))}.
        \]
        Hence, $F$ is a contraction for sufficiently large $\lambda$. 
    \end{proof}

\begin{theorem}[Existence and uniqueness for~\crefrange{eq:vis1}{eq:vis3}]
    For $(\nu, \eps) \in \R_{>0}^2$, under~\cref{as:as1}, there is a unique solution $q_{\nu, \eps} \in H^5\big((0, T) \times \R\big) \hookrightarrow C^3\big([0, T] \times \R\big)$ to~\cref{eq:vis1}.
\end{theorem}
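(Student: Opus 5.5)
Existence and uniqueness will come from \cref{Thm:co} combined with the Banach fixed point theorem, after which the regularity is raised by a parabolic bootstrap. Fix $\lambda\in\R_{>0}$ large enough that the operator $F$ of \cref{Thm:co} is a contraction on $L^2\big((0,T);L^2(\R)\big)$. Banach's fixed point theorem then gives a unique $r=F[r]\in\mathcal{W}(0,T)$. Because the nonlocal term is evaluated at $\exp(\lambda t)\varrho$ with $\varrho=r$, the fixed point solves \cref{eq:subs1}. By \cref{rem:subs}, read backwards, $q_{\nu,\eps}\coloneqq \exp(\lambda t)r$ is a weak solution of \cref{eq:vis1}--\cref{eq:vis3} in $\mathcal{W}(0,T)$.

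For uniqueness, note that any other weak solution $\tilde q\in\mathcal{W}(0,T)$ of \cref{eq:vis1} yields, via \cref{rem:subs}, a fixed point $\exp(-\lambda t)\tilde q$ of $F$. Since the fixed point of a contraction is unique, $\tilde q=q_{\nu,\eps}$. Two points need care. First, I have to check that $F$ maps the whole space into itself. For this I will use that $U_\nu$ is compactly supported and smooth, so that $U_\nu(W[\cdot])$ stays bounded and Lipschitz independently of the argument; the truncation in \cref{def:va} was designed for exactly this. Second, I have to confirm that the contraction constant in the proof of \cref{Thm:co} is indeed smaller than $1$ for large $\lambda$.

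The regularity $H^5$ comes from a bootstrap on the linear heat equation
\[
\partial_t q-\nu\partial_{xx}^2q=-\partial_x\Big(V_\eps(o_\nu-q)\,q\,U_\nu\big(W[q]\big)\Big)-\nu o_\nu''.
\]
All the data are smooth. We have $q_{0,\nu}\in H^k$ for every $k$, since $q_0\in \mathrm{BV}\cap L^\infty$ and $q_0\in L^1$, so its mollification lies in every $H^k$. The functions $o_\nu$ and $U_\nu$ are $C^\infty$ with bounded derivatives, and $o_\nu''\in H^k$ because $o'\in W^{1,1}\cap W^{1,\infty}$. The nonlocal term is also well behaved: by (K), $\gamma\in W^{2,1}$, and $\partial_x W[q]=-\gamma(0)q-\int_x^\infty\gamma'(y-x)q(t,y)\,\mathrm{d}y$. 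Hence $W[q]$ gains one derivative relative to $q$ in every Sobolev norm, and it loses none.

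Starting from $q\in\mathcal{W}(0,T)$, the right-hand side lies in $L^2\big((0,T);L^2\big)$. Maximal parabolic regularity, as in \cite{godlewski1991}, then gives $q\in L^2(H^2)\cap H^1(L^2)$. Differentiating the equation in $x$, which is justified by difference quotients, and repeating the argument raises the regularity by one derivative at each step. After finitely many steps, including time derivatives obtained from the equation itself, $q\in H^5\big((0,T)\times\R\big)$. The Sobolev embedding in two dimensions then yields $C^3$.

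I expect the main obstacle to be keeping the nonlinear composition $V_\eps(o_\nu-q)$ under control during the bootstrap. This requires $q\in L^\infty$, which I would obtain from a maximum principle, since $\exp$ is unbounded for negative arguments. It also requires the chain-rule estimates in the $H^k$ algebra with $k\ge1$ in one space dimension, together with the time-compatibility conditions needed for higher time regularity near $t=0$. I would handle the latter by requiring only the compatibility that is automatically satisfied by the smooth initial datum $q_{0,\nu}$.
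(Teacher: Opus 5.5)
Your existence and uniqueness argument is the paper's own: Banach's theorem for the map $F$ of \cref{Thm:co}, then \cref{rem:subs} read backwards. Your $H^5$ bootstrap goes beyond the paper, which asserts that regularity without argument. The bootstrap is fine in outline: $W$ commutes with $\partial_x$, and on the whole line with $q_{0,\nu}\in H^k$ for all $k$ no compatibility issues arise.

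The genuine gap is at the first point you flag yourself, namely well-posedness of $F$ and the contraction estimate, and the reason you give does not close it. Truncating $U$ only controls the factor $U_\nu(W[\cdot])$. The unknown enters through $h(x,u)\coloneqq V_\eps(o_\nu(x)-u)\,u$, and $V_\eps(s)=1-\e^{-s/\eps}$ blows up exponentially as $s\to-\infty$; the bound $\lvert V_\eps\rvert\le 1$ holds only on $\R_{\geq 0}$. Hence $\partial_u h(x,u)=1-\bigl(1+\tfrac{u}{\eps}\bigr)\e^{(u-o_\nu(x))/\eps}$ is unbounded as $u\to\infty$, and $h$ itself is unbounded. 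So neither the Lipschitz-flux parabolic theory that would make the frozen problem uniquely solvable, nor the energy estimate behind the contraction (which uses $\sup\lvert h\rvert$ and $\sup\lvert\partial_u h\rvert$), is available. The proof of \cref{Thm:co} asserts $\sup_u\lvert\partial_u h_g\rvert<\infty$ via $\lVert V_\eps\rVert_{L^\infty(\R)}$, which is infinite; both your proof and the paper's inherit this. Your uniqueness claim among all weak solutions in $\mathcal W(0,T)$ rests on the same property. You do notice the exponential growth, but only in the bootstrap, and you plan to cure it with a maximum principle. That comes too late, and \cref{thm:cp} cannot be quoted here, because its proof uses the $H^5$ regularity you are trying to establish.

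The missing step is a truncation that is afterwards shown to be inactive. Replace $h$ by $\tilde h(x,u)\coloneqq h(x,\tau(u))$, with $\tau$ smooth, nondecreasing, equal to the identity on $[-1,\lVert o\rVert_{L^\infty(\R)}+1]$ and constant outside a slightly larger interval. Then $\tilde h$ is bounded and globally Lipschitz in $u$. The factors $\e^{\pm\lambda t}$ cancel in the $\varrho$-difference term, so the numerator of the contraction constant does not depend on $\lambda$. Banach's theorem together with your bootstrap then yields a smooth solution of the truncated problem.

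To remove the truncation, note that $V_\eps(0)=0$ and the viscous term is $\nu(\partial_{xx}^2q-o_\nu'')$, so $o_\nu$ is an exact stationary solution of the truncated equation. Test with $(q-o_\nu)^+$, using only boundedness of $U_\nu$ and the Lipschitz bound of $\tilde h$; since $o_\nu-q_{0,\nu}\ge d_o>0$, Gronwall gives $q\le o_\nu$. The minimum-point argument of \cref{thm:cp}, now legitimately applied to this smooth solution, gives the lower bound for the range of $\nu$ treated there. So the truncated solution solves \cref{eq:vis1}. Uniqueness in the $H^5$ class follows because every smooth solution satisfies the same bounds and therefore solves the truncated problem.
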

\begin{proof}
    \cref{Thm:co} allows for the application of Banach's fixed point Theorem~\cite[9.23 Theorem]{rudin1976} to the equation~\cref{eq:subs1}, which means that it has a unique solution. By~\cref{rem:subs}, we get the unique solution for the original equation.
\end{proof}

Like in~\cite{obstacle}, we make a remark:
\begin{remark}[Decay of $q_{\nu, \eps}$ at $\infty$]\label{rem:dec}
    Let~\cref{as:as1} hold and $q_{\nu, \eps}$ be a solution to~\cref{eq:vis1}. Due to the $H^5\big((0, T) \times \R\big)$-regularity, we can conclude that
    \[
    \lim_{\lvert x \rvert \rightarrow \infty} \partial_\alpha q_{\nu, \eps}(t, x) = 0,
    \]
    for all $t \in [0, T]$ and every multi-index $\alpha \in \mathbb{N}^2$ of order up to four~\cite[Corollary 8.9]{brezis2010functional}.
\end{remark}
\subsection{Comparison principles}

The viscosity approximation allows for the desired maximum principle $q_{\nu, \eps} < o$. We will be able to extend this to the result $q_\eps < o$. This shows that the velocity never vanishes completely, implying that no permanent ``traffic congestion'' takes place.
\begin{theorem}[Comparison principle for $q_{\nu, \eps}$]\label{thm:cp}
    Let~\cref{as:as1} hold and let $q_{\nu, \eps}$ be the solution to~\cref{eq:vis1}. We set $I \coloneqq \left[\frac{1}{2} o_{\min}, \lVert o \rVert_{L^\infty(\R)} + 1 \right)$. Then, there is a mapping $f_\eps:\R \rightarrow \R$ independent of $\nu$ such that
    \[
    -\tfrac{\nu}{ \lVert o' \rVert_{L^\infty(\R)} \lVert U \rVert_{L^\infty(I)}} \leq q_{\nu, \eps}(t, x) \leq f_\eps(x) < o(x)
    \]
    holds for all $(t, x) \in (0, T) \times \R$,  
    $\eps \in \R_{>0}$ and $\nu \in \left(0, 
    \lVert o' \rVert_{L^\infty(\R)} \lVert U \rVert_{L^\infty(I)}\right)$.
\end{theorem}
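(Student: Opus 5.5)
We prove both bounds with a parabolic comparison (maximum) principle for \cref{eq:vis1}. The nonlocal term is treated as a given coefficient: set $a(t,x)\coloneqq U_\nu\bigl(W[q_{\nu,\eps}](t,x)\bigr)$, which is smooth and bounded. Then $q_{\nu,\eps}$ solves a local, semilinear parabolic equation
\[
\partial_t q + \partial_x\bigl(V_\eps(o_\nu(x)-q)\,q\,a(t,x)\bigr)=\nu\bigl(\partial_{xx}^2 q-o_\nu''\bigr),
\]
and we compare it with explicit time-independent sub- and supersolutions. The regularity $q_{\nu,\eps}\in C^3$ and the decay at infinity from \cref{rem:dec} let us evaluate the equation pointwise at a maximum of the difference $q-\bar q$. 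If the extremum sits at $|x|\to\infty$, we use the limits of $o$ at $\pm\infty$ together with the decay to exclude it. To handle a first touching time we use a standard perturbation, either $\bar q+\delta e^{Ct}$ or the exponential weighting of \cref{rem:subs}.

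\emph{Upper bound.} The idea is that $w\coloneqq o_\nu-q$ satisfies an equation with the same viscous term, since $\nu\partial_{xx}^2(o_\nu-q)=-\nu(\partial_{xx}^2 q-o_\nu'')$ up to sign. This is why the term $-\nu o_\nu''$ appears in \cref{eq:vis1}. Expanding the flux at a point where $w$ is small gives
\[
\partial_t w=\partial_x\bigl(V_\eps(w)(o_\nu-w)a\bigr)+\nu\partial_{xx}^2 w .
\]
Near $w=0$ we have $V_\eps(w)\approx w/\eps$, and $V_\eps'(0)=1/\eps$ is large. The term $V_\eps'(w)\,\partial_x w\,(o_\nu-w)a$ vanishes at a spatial minimum of $w$. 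What remains is
\[
V_\eps(w)\,\partial_x\bigl((o_\nu-w)a\bigr)+\nu\partial_{xx}^2 w \gtrsim -C\,V_\eps(w),
\]
where $C$ depends on $\|o'\|_\infty$, $\|U\|_{W^{1,\infty}}$ and $\|\gamma\|_{L^1}$. This bound uses $|\partial_x W|\le \|\gamma\|_\infty\|q\|_\infty$ and an a priori $L^\infty$ bound on $q$, which we close by a bootstrap in which $q$ stays in $I$. Since the right-hand side vanishes as $w\to 0$, $w$ can decay at most exponentially in time, which gives a positive lower bound for $w$.

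That bound, however, depends on $T$ and $\nu$ through $o_\nu$. To get an $f_\eps$ independent of $\nu$ and $t$, we instead look for a stationary supersolution $f_\eps=o-\delta_\eps(x)$ with
\[
\partial_x\bigl(V_\eps(o-f_\eps)\,f_\eps\,a\bigr)\ge 0 \quad\text{up to the viscous error}.
\]
Such a profile is obtained by solving an ODE in which the flux $V_\eps(\delta)(o-\delta)a$ is held monotone. The gap $d_o>0$ from (D) ensures $q_{0,\nu}\le f_\eps$, provided $f_\eps\ge o-d_o$.

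\emph{Lower bound.} We use the constant subsolution $\underline q\equiv-c\nu$ with $c=1/(\|o'\|_\infty\|U\|_{L^\infty(I)})$. At a negative minimum the dominant source term is $-\nu(-o_\nu'')$, and it must be balanced by the transport term $V_\eps(o_\nu-q)\,q\,\partial_x a$, which has size $\nu c\,\|o'\|\,\|U\|$. The choice of $c$, together with the restriction on $\nu$ that keeps $-c\nu\ge-1$ inside the support of $U_\nu$, is designed to make the minimum principle close.

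\emph{Main obstacle.} The hardest step is constructing $f_\eps$ uniformly in $\nu$ and $t$. The nonlocal coefficient $a$ is not monotone in $x$ a priori, so the supersolution inequality needs a uniform bound on $\partial_x a$ that is itself controlled by $\|q\|_\infty$. This creates a circularity, and we resolve it by a continuity argument in time: we assume $q\le \|o\|_\infty+1$ on $[0,t^*]$ and show the strict inequality persists. If a stationary $f_\eps$ cannot be found, the fallback is an $f_\eps$ that depends on $T$ through $e^{-CT/\eps}$, which still suffices for the statement since $T$ is fixed.
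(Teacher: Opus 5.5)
Your upper-bound argument, in its first form, is the paper's proof. The lower bound, however, has a genuine gap.

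\textbf{Lower bound.} The constant $\underline q\equiv-c\nu$ is not a subsolution of \cref{eq:vis1}. Inserting it into the operator gives
\[
\partial_x\bigl(V_\eps(o_\nu+c\nu)\,(-c\nu)\,a\bigr)+\nu o_\nu''=\nu o_\nu''-c\nu\bigl(V_\eps'(o_\nu+c\nu)\,o_\nu'\,a+V_\eps(o_\nu+c\nu)\,\partial_x a\bigr),
\]
and this would have to be $\le 0$ wherever first contact can occur. Neither transport term has a sign. The $\lVert o'\rVert\lVert U\rVert$-type term is $c\nu V_\eps' o_\nu' a$, which vanishes with $o_\nu'$. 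The term $c\nu V_\eps\,\partial_x a$ is of size $c\nu\lVert U'\rVert\,\lvert\partial_x W\rvert$, not $c\nu\lVert o'\rVert\lVert U\rVert$ as you wrote. Wherever $o_\nu'=0<o_\nu''$ and $\partial_x a=0$, the defect is $\nu o_\nu''>0$; this happens, e.g., near the minimum $x=1$ of the obstacle from the numerical section, ahead of compactly supported data. So the ``balance'' you invoke is not available.

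In fact this particular constant cannot come out of any argument. Take $U\equiv1$, $q_0\equiv0$, $o(x)=1.2-\e^{-(x-1)^2}$, $\eps\ll o_{\min}$ and $T>\sqrt2$. Up to terms exponentially small in $1/\eps$, the problem is $\partial_t q+\partial_x q=\nu(\partial_{xx}^2q-o_\nu'')$. Its solution is $q=\nu\bigl(o'(x-t)-o'(x)\bigr)+O(\nu^2)$. At $t=\sqrt2$, $x=1+1/\sqrt2$ this is about $-2\nu\lVert o'\rVert_{L^\infty(\R)}\approx-1.72\,\nu$, below the claimed $-\nu/\lVert o'\rVert_{L^\infty(\R)}\approx-1.17\,\nu$.

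What the extremum argument does give (this is the ``analogous argument'' the paper merely refers to) is an $O(\nu)$ bound. Set $\mu(t)\coloneqq-\min_x q_{\nu,\eps}(t,x)$. The conditions $\partial_x q=0$, $\partial_{xx}^2 q\ge0$ at the minimum yield $\mu'\le C\mu+\nu\lVert o''\rVert_{L^\infty(\R)}$. Here $C$ is independent of $\nu$ and $\eps$, because $o_\nu-q\ge o_{\min}$ at that point and $V_\eps'(s)\le 1/(\e s)$. Hence $q_{\nu,\eps}\ge-\nu\lVert o''\rVert_{L^\infty(\R)}T\e^{CT}$, which is all that is used later to get $q_\eps\ge0$.

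\textbf{Upper bound.} Your computation at a spatial minimum of $w=o_\nu-q$ goes as follows: the $V_\eps'(w)\partial_x w$ term drops, the rest is $\ge -CV_\eps(w)$, and $V_\eps(s)\le s/\eps$ gives $\min_x w(t,\cdot)\ge d_o\e^{-Ct/\eps}$. This is exactly what the paper does with $\sup_x(q-o)$, via an ODE comparison in which $0$ is an equilibrium because $V_\eps(0)=0$. The paper's $f_\eps$ also comes from that ODE on $[0,T]$ and hence depends on $T$. So your ``fallback'' is the actual proof.

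The stationary supersolution is unnecessary and, as sketched, unjustified. No fixed profile is known to keep $x\mapsto V_\eps(\delta_\eps)(o-\delta_\eps)\,a(t,x)$ nondecreasing for all $t$, since $\partial_x a$ depends on the unknown solution and has no sign.

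The $o_\nu$-versus-$o$ issue you raise is real, and the paper passes over it. The argument gives $q_{\nu,\eps}\le o_\nu-d_o\e^{-CT/\eps}$. This transfers to a $\nu$-independent $f_\eps<o$ only for $\nu\lVert o'\rVert_{L^\infty(\R)}<d_o\e^{-CT/\eps}$.
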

\begin{proof}
    Let $m:[0, T] \rightarrow \R$, $m_{\nu, \eps}(t) \coloneqq \sup_{x \in \R} q_{\nu, \eps}(t, x) - o(x)$ for $t \in [0, T]$. Due to $q_{\nu, \eps} \in H^5\big((0, T) \times \R \big)$, it is bounded and Lipschitz continuous. Hence, $m$ is well defined and differentiable a.e.~\cite[Theorem 3.2]{evans}.\\
    Now, fix any $t \in [0, T]$ where $m_{\nu, \eps}$ is differentiable. Then,
    \begin{equation}\notag
    m_{\nu, \eps}'(t) = \partial_t q_{\nu, \eps}\big(t, x^\ast(t)\big),
    \end{equation}
    where $x^\ast(t) \in \overline{\R}$ is chosen such that $q_{\nu, \eps}\big(t, x^\ast(t)\big) - o\big(x^\ast(t) \big) = \sup_{x \in \R} q_{\nu, \eps}(t, x) - o(x)$~\cite[Theorem 1]{Milgrom2002}. Here an evaluation at $-\infty$ or $\infty$ is understood in the limiting sense. So, owing to~\cref{rem:dec}, this is either $0$ or, if $x^\ast(t) \in \R$, we can plug in the PDE in~\cref{eq:vis1}. One can then compute, exploiting that $\partial_x q_{\nu, \eps}\big(t, x^\ast(t)\big) -o'\big(x^\ast(t)\big)= 0$ and $\partial_x^2 q_{\nu, \eps}\big( t, x^\ast(t)\big)- o''\big(x^\ast(t)\big) \leq 0$: 
    \begin{align}
        m_{\nu, \eps}'(t) & = - \partial_x \big( V_\varepsilon \big(o_\nu(x^\ast(t)) - q_{\nu, \eps}(t, x^\ast(t))\big)q_{\nu, \eps}(t, x^\ast(t)) U_\nu\big(W[q_{\nu, \eps}](t, x)\big) \big)  \notag\\
        & \quad + \nu \big(\partial_{xx}^2q\big(t, x^\ast(t)\big) -o''\big(x^\ast(t)\big) \big) \notag\\
        & = - V_\varepsilon'\big(o_\nu(x^\ast(t)) - q_{\nu, \eps}(t, x^\ast(t))\big)\big(o_\nu'(x^\ast(t)) - \partial_x q_{\nu, \eps}(t, x^\ast(t)) \big)\notag\\
        & \qquad \cdot q_{\nu, \eps}(t, x^\ast(t)) U_\nu\big(W[q_{\nu, \eps}](t, x)\big)\notag\\
        & \quad -  V_\varepsilon\big(o_\nu(x^\ast(t)) - q_{\nu, \eps}(t, x^\ast(t))\big)\partial_x q_{\nu, \eps}(t, x^\ast(t)) U_\nu\big(W[q_{\nu, \eps}](t, x)\big)\notag \\
        & \quad - V_\varepsilon\big(o_\nu(x^\ast(t)) - q_{\nu, \eps}(t, x^\ast(t))\big)q_{\nu, \eps}(t, x^\ast(t)) U_\nu'\big(W[q_{\nu, \eps}](t, x)\big)\notag \\
        & \qquad \cdot \partial_x W[q_{\nu, \eps}](t, x)+ \nu \big(\partial_{xx}^2q\big(t, x^\ast(t)\big) -o''\big(x^\ast(t)\big) \big).\notag
        \intertext{We plug in $o_\nu'(x^\ast(t)) - \partial_x q_{\nu, \eps}(t, x^\ast(t)) = 0$, wherever $\partial_x q_{\nu, \eps}$ appears and use $\partial_x^2 q_{\nu, \eps}\big( t, x^\ast(t)\big)- o''\big(x^\ast(t)\big) \leq 0$ (necessary optimality conditions)}
        & \leq V_\varepsilon\big(-m_{\nu, \eps}(t)\big)o_\nu'\big(x^*(t)\big) U_\nu\big(W[q_{\nu, \eps}](t, x)\big) \notag\\
        & \quad - V_\varepsilon\big(o_\nu(x^\ast(t)) - q_{\nu, \eps}(t, x^\ast(t))\big)q_{\nu, \eps}(t, x^\ast(t)) \lVert U_\nu ' \rVert_{L^\infty(\R)} \partial_x W[q_{\nu, \eps}](t, x). \notag\\
        \intertext{Applying the estimate $\lvert \partial_x W[q_{\nu, \eps}](t, x) \rvert \leq \lVert q_{\nu, \eps} \rVert_{L^\infty(\R)} \lvert \gamma \rvert_{\mathrm{TV}(\R_{\geq 0})} $ and estimating other terms, we obtain}
        & \leq  V_\varepsilon\big(-m_{\nu, \eps}(t)\big)\lVert o' \rVert_{L^\infty(\R)} \lVert U_\nu \rVert_{L^\infty(\R)} - V_\varepsilon\big(-m_{\nu, \varepsilon}(t)\big)\lVert q_{\nu, \eps} \rVert_{L^\infty(\R)}^2  \lVert U_\nu' \rVert_{L^\infty(\R)} \lvert \gamma \rvert_{\mathrm{TV}(\R_{\geq 0})}. \label{eq:ub1}
    \end{align} 
 According to the ODE comparison principle~\cite[Lemma 1.1]{teschlbook}, $m_{\nu, \eps} \leq g_{\nu, \eps}$, where $g_{\nu, \eps}:[0, T] \rightarrow\R$ solves the initial value problem
    \begin{align*}
    g_{\nu, \eps}'(t) & = V_\varepsilon(-g_{\nu, \eps}(t))\lVert o' \rVert_{L^\infty(\R)} \lVert U_\nu \rVert_{L^\infty(\R)} - V_\varepsilon\big(-g_{\nu, \varepsilon}(t)\big)\lVert q_{\nu, \eps} \rVert_{L^\infty(\R)}^2  \lVert U_\nu' \rVert_{L^\infty(\R)} \lvert \gamma \rvert_{\mathrm{TV}(\R_{\geq 0})}  \\
    g_{\nu, \eps}(0) & = \sup_{x \in \R} ~q_{0, \nu}(x) -o_\nu(x) \underset{\cref{as:as1}\text{(D)}}{<} 0
    \end{align*}
    on $[0, T]$. Now, note that $0$ is an equilibrium of the above ODE. So, since $g_{\nu, \eps}(0) < 0$, $g_{\nu, \eps} <0$ and hence $m_{\nu, \eps} <0$. We conclude that $q_{\nu, \eps}(t, x)< o(x)$ for all $(t, x) \in [0, T] \times \R$.\\
    Repeating the argument and applying the new bound $\lVert o \rVert_{L^\infty(\R)}$ to $\lVert q_{\nu, \eps} \rVert_{L^\infty(\R)}$ in~\cref{eq:ub1}, we can conclude that $f_\eps$ can be found. This is because the resulting initial value problem for the upper bound on $m_{\nu, \eps}$ will not depend on $\nu$ anymore.\\
    An analogous argument (cf. e.g.~\cite{obstacle}) applies for the lower bound.
\end{proof}

\section{Compactness of the family \texorpdfstring{$q_{\nu, \eps}$}{q} and existence for the hyperbolic equation}
Let~\cref{as:as1} hold. We aim to show that the limits $\lim_{\nu \searrow 0} q_{\nu, \eps}$ and $\lim_{\eps \searrow 0} q_{\eps}$ exist in $L^1_{loc}(\R)$. The former limit will yield existence for~\cref{eq:nocl} and the latter will give us the ``solution'' to the obstacle problem. For this, we can prove one sided Lipschitz (OSL) bounds:
\begin{theorem}[OSL bounds for $q_{\nu, \eps}$]\label{thm:oslq}
    Let \cref{as:as1} hold as well as that $q_0$ is one sided Lipschitz bounded from below, i.e., \[\exists c_{q_0} \in \R\ \forall (x,y)\in\R^{2}, x\neq y:\ \tfrac{q_0(x) -q_0(y)}{x-y}\geq c_{q_0}.\] 
    Then, the solution $q_{\nu, \eps}$ of \cref{eq:vis1} for \(\eps,\nu\in\R_{>0},\ \eps\leq \nu\) small enough satisfies
    \[
    \tfrac{q_{\nu, \eps}(t, x) - q_{\nu, \eps}(t, y)}{x-y}\geq \hat{K}
    \]
    for all \(t\in[0,T]\) and \((x,y)\in\R^{2},\ x\neq y\) with 
    \begin{align*}
        \hat{K}& = \hat{K}\big(\lVert o \rVert_{W^{2, \infty}(\R)}, \lVert \gamma \rVert_{W^{2, 1}(\R_{\geq 0})}, \lVert U \rVert_{W^{2, \infty}(J)}, \lVert o \rVert_{W^{2, \infty}(\R)}, c_{q_0} \big) \in \R\\
    J &\coloneqq \left[-\lVert o \rVert_{L^\infty(\R)} -1, \lVert o \rVert_{L^\infty(\R)} + 1\right] \subset\R.
    \end{align*}
\end{theorem}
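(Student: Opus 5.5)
Since $q_{\nu,\eps}$ is smooth by the existence theorem (it lies in $H^5\hookrightarrow C^3$), a lower one-sided Lipschitz bound $\frac{q(t,x)-q(t,y)}{x-y}\geq \hat K$ is equivalent to $\partial_x q_{\nu,\eps}(t,x)\geq \hat K$ for all $(t,x)$. The plan is an Oleinik-type minimum principle for $w\coloneqq \partial_x q_{\nu,\eps}$, in the spirit of the proof of \cref{thm:cp}. Set $m(t)\coloneqq \inf_{x\in\R} w(t,x)$. By \cref{rem:dec}, $w$ and its derivatives vanish at $\pm\infty$, so either $m(t)\geq 0$ or the infimum is attained at some $x^\ast(t)\in\R$. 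At such a point $\partial_x w=0$ and $\partial_{xx}w\geq 0$, and $m'(t)=\partial_t w(t,x^\ast(t))$ almost everywhere by the envelope argument of~\cite{Milgrom2002}. We initialise with $w(0,\cdot)=q_0'\ast\varphi_\nu\geq c_{q_0}$.

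Differentiating \cref{eq:vis1} in $x$ gives
\[
\partial_t w+\partial_{xx}\bigl(V_\eps(o_\nu-q)\,q\,U_\nu(W[q])\bigr)=\nu\bigl(\partial_{xx}w-o_\nu'''\bigr).
\]
We expand the second derivative of the flux at $x^\ast$ and drop every term containing $\partial_x w$. The remaining terms are then sorted by their dependence on $w$ and $\eps$.
\begin{itemize}
\item[(a)] A quadratic term in $w$ comes from $\partial_{qq}(V_\eps(o-q)q)\,U\,w^2$. Here $\partial_{qq}(V_\eps(o-q)q)=V_\eps''q-2V_\eps'$, evaluated at $o-q$, which is $\leq -2V_\eps'<0$ because $V_\eps''<0$, $V_\eps'>0$ and $q\gtrsim 0$. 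Together with $U\geq U_{\min}$ on the relevant range (guaranteed by \cref{thm:cp}), this gives $-\partial_t w\leq -c\,w^2$, so the term pushes $m$ upward when $m$ is very negative. This is the Oleinik mechanism; the concavity of the flux in $q$ provides it.
\item[(b)] Mixed terms have the form $w\,o_\nu'\cdot V_\eps''$ and $w\cdot U'\,\partial_x W$. Since $\partial_x W[q]=-\gamma(0)q-\int\gamma'(y-x)q\,\mathrm dy$, and $q$ is bounded by \cref{thm:cp}, these are linear in $w$ with coefficients bounded by $\|\gamma\|_{W^{1,1}}$, $\|U\|_{W^{1,\infty}(J)}$ and $\|o\|_{W^{1,\infty}}$. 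The exception is the factors $V_\eps'$ and $V_\eps''$, which are of size $\eps^{-1}$ and $\eps^{-2}$.
\item[(c)] Terms free of $w$ involve $o_\nu''$, $\partial_{xx}W$ (which requires $\gamma\in W^{2,1}$, since $\partial_{xx}W$ contains $\gamma'(0)q$, $\gamma(0)\partial_x q$ and $\int\gamma''q$), and $\nu o_\nu'''$. We handle $\nu o_\nu'''$ by moving a derivative onto the mollifier, so that $\nu\|o_\nu'''\|\leq C\|o''\|_{L^\infty}$; this is the place where the restriction $\eps\leq\nu$ enters.
\end{itemize}

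The main obstacle is the $\eps$-singular factors $V_\eps'(o-q)$ and $V_\eps''(o-q)$, which blow up as $\eps\searrow 0$. The plan is to factor everything through $V_\eps'$. For the exponential $V_\eps$ we have the exact identity $V_\eps''=-\eps^{-1}V_\eps'$. The good quadratic term is then $\bigl(2+\eps^{-1}q\bigr)V_\eps'\,U\,w^2$, and each singular bad term carries a factor $V_\eps'$ times at most $\eps^{-1}$ times a bounded quantity times $w$ or times $1$. Young's inequality, $|w|\eps^{-1}A\leq \tfrac{q}{2\eps}w^2+\tfrac{A^2}{2q\eps}$, absorbs these into the good term wherever $q$ is bounded away from $0$. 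Where $q$ is small we instead have $o-q\geq o_{\min}/2$, so $V_\eps'\leq \eps^{-1}\e^{-o_{\min}/(2\eps)}$ is uniformly bounded for small $\eps$. Combining both regimes should give the differential inequality
\[
m'(t)\geq -C_1-C_2\,m(t)\quad\text{whenever } m(t)<0,
\]
with $C_1,C_2$ independent of $\nu$ and $\eps$.

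ODE comparison as in \cref{thm:cp} (\cite[Lemma 1.1]{teschlbook}) then yields $m(t)\geq \min\{c_{q_0},0\}\e^{-C_2T}-C_1T\e^{C_2T}\eqqcolon\hat K$. If the absorption in the second step produces constants that depend on $\eps$, the fallback is to exploit the $-w^2$ term more fully, which gives a Riccati lower bound uniform in time.
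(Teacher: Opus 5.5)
\textbf{There is a genuine gap.} Your architecture is the paper's: a minimum principle for $w=\partial_x q_{\nu,\eps}$ via the envelope argument, then the differentiated equation, then ODE comparison. The step that is supposed to make the constants $\eps$-independent fails.

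After multiplication by $V_\eps'(o_\nu-q_{\nu,\eps})$, your Young inequality leaves the remainder $V_\eps'\,\tfrac{A^2}{2q\eps}$. This can be of order $\eps^{-2}$ wherever $o_\nu-q_{\nu,\eps}=O(\eps)$. Likewise, the $w$-free members of the $V_\eps'$-group, e.g.\ $-V_\eps'\,q\,o_\nu''\,U_\nu$ and $-2V_\eps'\,q\,o_\nu'\,U_\nu'\,\partial_xW$, are of order $\eps^{-1}$ and have no sign. Any absorption that splits off an additive constant inherits the factor $V_\eps'$. So an inequality $m'\ge -C_1-C_2m$ valid for \emph{all} $m<0$ with $\eps$-uniform constants cannot be extracted from the expansion. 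The Riccati fallback does not help as stated: the coefficient $V_\eps'(2+q/\eps)U_\nu$ of $w^2$ degenerates (it is exponentially small where $o-q$ is of order one), so there is no uniform Riccati constant.

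\textbf{The missing idea, which is what the paper does,} is to use the \emph{sign} of the whole $V_\eps'$-group rather than absorbing pieces of it. Since $\partial_{xx}V_\eps(o_\nu-q)=V_\eps''(o_\nu'-w)^2+V_\eps'(o_\nu''-\partial_x w)$ and $V_\eps''=-\eps^{-1}V_\eps'$, all $\eps^{-1}$-terms combine into a perfect square. At the minimiser one gets
\[
m'\ \ge\ V_\eps'\Bigl(\tfrac{q}{\eps}\,U_\nu\,(m-o_\nu')^2+a_1m^2+a_2m+a_3\Bigr)+K_1+K_2\,m,
\]
with $a_1>0$ and $a_2,a_3,K_1,K_2$ independent of $\eps$ and $\nu$. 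Here $K_1+K_2m$ collects the terms with $V_\eps$, $\partial_xW$, $\partial_{xx}W$ and the viscous term. Below an $\eps$-independent threshold $a_-<0$ (the smaller root of the quadratic), the bracket is nonnegative no matter how large $V_\eps'$ is, so it can be dropped. The linear comparison is run only in that regime, giving a bound of the form $\hat K=\min\{c_{q_0},a_-,\dots\}$. Your formula for $\hat K$ must be modified to include this threshold.

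\textbf{A secondary correction.} The coupling of $\nu$ and $\eps$ does not enter through $\nu o_\nu'''$: your bound $\nu\lVert o_\nu'''\rVert_{L^\infty(\R)}\le C\lVert o''\rVert_{L^\infty(\R)}$ holds for every $\nu$. It matters instead for your ``$q\gtrsim 0$''. \cref{thm:cp} only gives $q_{\nu,\eps}\ge -c\,\nu$ for a fixed $c>0$. Hence the term $\tfrac{q}{\eps}(m-o_\nu')^2$ keeps a usable sign, up to an error absorbable into $a_1m^2$, only if $\nu\lesssim\eps$. This is the direction of the condition $\nu<\lVert o'\rVert_{L^\infty(\R)}\lVert U\rVert_{L^\infty(J)}\eps$ imposed in the paper's proof.
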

\begin{proof}
    We can argue like in~\cite[Theorem 11]{obstacle} and/or in the proof of \cref{thm:cp} and estimate for \(t\in[0,T]\) $m_{\nu, \eps}(t) \coloneqq \inf_{y \in \R} \partial_y q_{\nu, \eps}(t, y)$. We know that 
    \[
    m_{\nu, \eps}'(t) = \partial_t \partial_2 q_{\nu, \eps}\big(t, x^*(t)\big) 
    \]
    where $x^*(t) \in \overline{\R}$ is chosen such that $m_{\nu, \eps}(t) = \partial_2 q_{\nu, \eps}\big(t, x^*(t)\big) - o\big(x^*(t)\big)$.  Moreover, let us assume that $\nu < \lVert o' \rVert_{L^\infty(\R)} \lVert U \rVert_{L^\infty(J)} \eps$ in order to get rid of the viscosity expression.
    
    Then, after further computation that is similar to the proof of \cref{thm:cp}, and applying the definition of $m_{\nu, \eps}$ and the optimality conditions $\partial_{xx} q_{\nu, \eps}(t, x) = 0$,  $o_\nu'''(x) - \partial_{xxx}q_{\nu, \eps}(t, x) \geq 0,$ when \(x\in\R\) is at time \(t\) one point where the minimum of \(\partial_{2}q_{\nu,\eps}\) is attained, one obtains the estimate when abbreviating with a slight abuse of notation \(V_{\eps}'\coloneqq V_{\eps}'(o_{\nu}-q_{\nu,\eps}\big)\) as well as recalling that \(V_{\eps}''\leqq 0\) and \(q_{\nu,\eps}\leqq o\) and \(\|V_{\eps}\|_{L^{\infty}(\R)}\leq 1\)
    \begin{equation}
    \begin{aligned}
        m_{\nu, \eps}'(t) & \geq V_\varepsilon'm_{\nu,\varepsilon}(t)^2 U_{\min} \\
        & \quad +V_{\eps}'m_{\nu, \eps}(t) \big(2\lVert o \rVert_{L^\infty(\R)}\lVert U \rVert_{L^\infty(\R)} + 2 \lVert o' \rVert_{L^\infty(\R)} \lVert U \rVert_{L^\infty(J)} +2 \lVert o \rVert_{L^\infty(\R)}^2 \lVert U' \rVert_{L^\infty(\R)} \lvert \gamma \rvert_{\mathrm{TV}(\R_{\geq 0})} \\
        & \qquad + 2 \lVert o' \rVert_{L^\infty(\R)} \lVert o \rVert_{L^\infty(\R)} \lVert U' \rVert_{L^\infty(J)} \lvert \gamma \rvert_{\mathrm{TV}(\R_{\geq 0})} + \lVert o'' \rVert_{L^\infty(\R)} \lVert o \rVert_{L^\infty(\R)} \lVert U \rVert_{L^\infty(J)}\\
        & \qquad - \lVert o \rVert_{L^\infty(\R)} ^2 \lVert U \rVert_{L^\infty(J)} -\lVert o'' \rVert_{L^\infty(\R)} \lVert o \rVert_{L^\infty(\R)} \lVert U \rVert_{L^\infty(J)} \big)\\
        &\quad- \lVert V_\varepsilon \rVert_{L^\infty(\R)} \lVert o \rVert_{L^\infty(\R)}^3 \lVert U'' \rVert_{L^\infty(\R)} \lvert \gamma \rvert_{\mathrm{TV}(\R_{\geq 0})}^2 
         - \lVert V_\varepsilon \rVert_{L^{\infty}(\R)} \lVert o \rVert_{L^\infty(\R)}^2 \lVert U' \rVert_{L^\infty(J)} \lVert \gamma \rVert_{W^{2, 1}(\R_{\geq 0})} \\
        & \quad + 2 \lVert V_\varepsilon \rVert_{L^\infty(\R)} \lVert U' \rVert_{L^\infty(J)} \lVert o \rVert_{L^\infty(\R)} \lvert \gamma \rvert_{\mathrm{TV}(\R_{\geq 0})} m_{\nu, \eps}(t). 
    \end{aligned}     
    \label{eq:oslqlast}
    \end{equation}
Setting
    \begin{align*}
        a_1 & \coloneqq U_{\min} \\
        a_2 & \coloneqq 2\lVert o \rVert_{L^\infty(\R)}\lVert U \rVert_{L^\infty(\R)} + 2 \lVert o' \rVert_{L^\infty(\R)} \lVert U \rVert_{L^\infty(J)}  +2 \lVert o \rVert_{L^\infty(\R)}^2 \lVert U' \rVert_{L^\infty(J)} \lvert \gamma \rvert_{\mathrm{TV}(\R_{\geq 0})} \\
        & \qquad + 2 \lVert o' \rVert_{L^\infty(\R)} \lVert o \rVert_{L^\infty(\R)} \lVert U' \rVert_{L^\infty(J)} \lvert \gamma \rvert_{\mathrm{TV}(\R_{\geq 0})}  + \lVert o'' \rVert_{L^\infty(\R)} \lVert o \rVert_{L^\infty(\R)} \lVert U \rVert_{L^\infty(J)} \\
        a_3 & \coloneqq -\lVert o \rVert_{L^\infty(\R)} \lVert U \rVert_{L^\infty(J)} -\lVert o'' \rVert_{L^\infty(\R)} \lVert o \rVert_{L^\infty(\R)} \lVert U \rVert_{L^\infty(J)},
    \end{align*}
    we study the behavior of  the polynomial $p(m_{\nu, \varepsilon}(t)) = a_{1}m_{\nu, \varepsilon}(t)^2 + a_{2}m_{\nu, \varepsilon} + a_3$ for very small $m_{\nu, \varepsilon}(t)$. We find for the zeros $a_{\pm} \in \R$: 
    \begin{align*}
        a_{\pm} = \tfrac{-a_{2} \pm\sqrt{a_{2}^2 - 4 a_{1}a_3}}{2a_{1}} .
    \end{align*}
    Since $a_1 > 0$, $p$ is positive, whenever $m_{\nu, \varepsilon}(t) < a_-$. So, either $m_{\nu, \varepsilon} \geq a_-$, or if $m_{\nu, \eps}(t) < a_-$, we can find, continuing from~\cref{eq:oslqlast} and estimating the bracket after $V_\eps'(o_\nu(x) -q_{\nu, \eps}(t, x))$ by $0$:
    \begin{align*} 
        m_{\nu, \eps}(t) \geq  & -  \lVert o \rVert_{L^\infty(\R)}^3 \lVert U'' \rVert_{L^\infty(J)} \lvert \gamma \rvert_{\mathrm{TV}(\R_{\geq 0})}^2  -  \lVert o \rVert_{L^\infty(\R)}^2 \lVert U' \rVert_{L^\infty(J)} \lVert \gamma \rVert_{W^{2, 1}(\R_{\geq 0})}  \\
        & \quad + 2 \lVert U' \rVert_{L^\infty(J)} \lVert o \rVert_{L^\infty(\R)} \lvert \gamma \rvert_{\mathrm{TV}(\R_{\geq 0})} m_{\nu, \eps}(t).
    \end{align*}
    Let us set (as stated in the theorem)
    \begin{align*}
        K_1 & \coloneqq -  \lVert o \rVert_{L^\infty(\R)}^3 \lVert U'' \rVert_{L^\infty(J)} \lvert \gamma \rvert_{\mathrm{TV}(\R_{\geq 0})}^2 
 -  \lVert o \rVert_{L^\infty(\R)}^2 \lVert U' \rVert_{L^\infty(J)} \lVert \gamma \rVert_{W^{2, 1}(\R_{\geq 0})}\\
        K_2 & \coloneqq 2  \lVert U' \rVert_{L^\infty(J)} \lVert o \rVert_{L^\infty(\R)} \lvert \gamma \rvert_{\mathrm{TV}(\R_{\geq 0})}.
    \end{align*}
   Applying the ODE comparison principle~\cite[Lemma 1.1]{teschlbook} (as in the proof of~\cref{thm:cp}), we obtain, taking into account all the different cases that we have considered so far: 
    \begin{align*}
        m_{\nu, \varepsilon}(t) & \geq \min\left \lbrace \exp\big(K_2 t\big)\left(c_{q_0} - \tfrac{K_2}{K_1} + \tfrac{K_2}{K_1}\exp(- K_2t) \right), c_{q_0}, a_- \right \rbrace \\
        & \geq \min\left \lbrace \exp\big(K_2 T\big)c_{q_0}  + \tfrac{K_2}{K_1} , c_{q_0}, a_- \right \rbrace
    \end{align*}
    for all $t \in (0, T)$, which concludes the proof, after taking into account the definition of $m_{\nu, \eps}$.  
\end{proof} 
Next, we establish some uniform \(\mathrm{BV}\) bounds on \(q_{\nu,\eps}\) uniform in \(\nu\) and \(\eps\):
\begin{theorem}[$\mathrm{BV}$ and $L^1$ bounds for $q_{\nu, \eps}$]\label{thm:bv}
    Under~\cref{as:as1}, the solutions $\big( q_{\nu, \eps}\big)_{\nu, \eps > 0}
    $ to~\cref{eq:vis1} satisfy:
    \begin{align*}
         \lVert q_{\nu, \eps} \rVert_{L^\infty((0, T); L^1(\R))} & \leq \nu T \lVert o'' \rVert_{L^1(\R)} + \lVert q_0 \rVert_{L^1(\R)} \\
        \lvert q_{\nu, \eps}(t, \cdot)\rvert_{\mathrm{TV}(\R_{\geq 0})} & \leq \exp\bigg(T\big( \lVert U  \rVert_{L^\infty(J)} \lVert o' \rVert_{L^\infty(\R)} + \lVert o \rVert_{L^\infty(\R)}^2 \lVert U' \rVert_{L^\infty(J)}\\
        & \qquad + \lVert V_\eps' \rVert_{L^\infty(\R)} \lVert o \rVert_{L^\infty(\R)}^2  \lVert U' \rVert_{L^\infty(J)} 2  \lVert \gamma \rVert_{W^{2, 1}(\R_{\geq 0})} \\
        & \qquad + 2 \lVert U' \rVert_{L^\infty(J)} \lVert o \rVert_{L^\infty(\R)}  \lVert \gamma \rVert_{W^{2, 1}(\R_{\geq 0})}\big)\bigg) \\
        & \qquad \cdot\Big(\lvert q_{0} \rvert_{\mathrm{TV}(\R_{\geq 0})}+T \lVert o' \rVert_{L^1(\R)} \lVert U \rVert_{L^\infty(J)} +2T\big(\nu T \lVert o'' \rVert_{L^1(\R)} + \lVert q_0 \rVert_{L^1(\R)} \big)\\
        & \qquad \qquad + T\big( 4 \lVert o' \rVert_{L^1(\R)} \lVert U' \rVert_{L^\infty(J)} \lVert o \rVert_{L^\infty(\R)} \lVert \gamma \rVert_{W^{2, 1}(\R_{\geq 0})} \\
        & \qquad \quad \quad  \quad \cdot \lVert U' \rVert_{L^\infty(J)} \lVert o \rVert^2_{L^\infty(\R)} \lVert \gamma \rVert^2_{W^{2, 1}(\R_{\geq 0})}  \big) \Big)
    \end{align*}
    for all $\nu, \eps \in \R_{>0}$, $t \in [0, T]$ when 
    \[
    J \coloneqq \left[-\lVert o \rVert_{L^\infty(\R)} -1, \lVert o \rVert_{L^\infty(\R)} + 1\right].
    \]
\end{theorem}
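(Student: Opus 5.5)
For the $L^1$ bound, I will integrate \cref{eq:vis1} over $\R$ at fixed $t$. The flux term and $\nu\partial_{xx}^2 q_{\nu,\eps}$ integrate to zero because $q_{\nu,\eps}$ and its derivatives decay at $\pm\infty$ (\cref{rem:dec}). This leaves
\[
\tfrac{\mathrm{d}}{\mathrm{d}t}\int_\R q_{\nu,\eps}(t,x)\,\mathrm{d}x = -\nu\int_\R o_\nu''(x)\,\mathrm{d}x .
\]
Since $q_{\nu,\eps}$ may take slightly negative values (\cref{thm:cp}), I will work with $\int_\R |q_{\nu,\eps}|$ rather than $\int_\R q_{\nu,\eps}$. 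To do so I multiply the equation by a smooth approximation $\eta_\delta$ of $\mathrm{sgn}(q_{\nu,\eps})$ and integrate. The viscous term gives $-\nu\int\eta_\delta'(q)|\partial_x q|^2\le 0$. The flux term $-\int \eta_\delta(q)\,\partial_x(\dots)$ becomes, after integration by parts, $\int \eta_\delta'(q)\,\partial_x q\, V_\eps(o_\nu-q)\,q\,U_\nu(W[q])$. The factor $\eta_\delta'(q)\,q$ is supported in $\{|q|\le\delta\}$ and bounded by $O(1)$, so this term vanishes as $\delta\searrow0$ by dominated convergence. The source contributes at most $\nu\|o_\nu''\|_{L^1}\le\nu\|o''\|_{L^1}$. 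Integrating in time and using $\|q_{0,\nu}\|_{L^1}\le\|q_0\|_{L^1}$ gives the first estimate.

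For the TV bound, I will differentiate \cref{eq:vis1} in $x$, set $p\coloneqq\partial_x q_{\nu,\eps}$, and multiply by $\eta_\delta(p)$. Here $q_{\nu,\eps}\in H^5$ gives enough regularity and decay, again by \cref{rem:dec}. Writing the flux as $h(t,x)=V_\eps(o_\nu-q)\,q\,U_\nu(W[q])$, I expand
\[
\partial_x h = \partial_u h\,p + \partial_{o}h\,o_\nu' + V_\eps\, q\, U_\nu'(W[q])\,\partial_xW[q].
\]
Here $\partial_u h=-V_\eps'\,q\,U_\nu+V_\eps U_\nu$ is the transport part, so $\partial_x(\partial_u h\, p)$ is a divergence. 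Testing it with $\eta_\delta(p)$ leaves only a term $\int \eta_\delta'(p)\,p\,\partial_x p\,\partial_u h$, which vanishes as $\delta\searrow0$ just as in the $L^1$ step. The remaining terms are lower order and I will bound each in $L^1$:
\begin{itemize}
\item $V_\eps' o_\nu' q U_\nu$ differentiated in $x$, where $V_\eps'$ enters;
\item $V_\eps q U_\nu' \partial_x W[q]$, estimated with $\partial_xW[q]=-\gamma(0)q-\int\gamma'(y-x)q(y)\,\mathrm{d}y$ and $\partial_{xx}^2W$ handled by $\|\gamma\|_{W^{2,1}}$, together with $\|q\|_{L^\infty}\le\|o\|_{L^\infty}$ from \cref{thm:cp};
\item the source $\nu o_\nu'''$, whose contribution is absorbed after integrating by parts against the sign function, or bounded by $\|o''\|$-type quantities.
\end{itemize}
Pieces proportional to $|p|$ give the exponential rate. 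Pieces not containing $p$ give the additive terms $T\|o'\|_{L^1}\|U\|_{L^\infty(J)}$ and the $L^1$-norm contributions from the first step. Gronwall's inequality then yields the stated bound. Note that the constant in the statement carries $\|V_\eps'\|_{L^\infty}$ in the exponent, so I do not need uniformity in $\eps$ there, only in $\nu$.

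The main obstacle is the term containing $V_\eps'(o_\nu-q)(o_\nu'-p)$ multiplied by further factors and then differentiated. The part with $p\,\partial_x p$ must be treated as a transport/divergence contribution, and the part with $V_\eps''$ should be shown to carry a sign or be integrated by parts so it never appears with an unbounded coefficient. My first attempt is to group all $u$-derivatives of $h$ into the characteristic speed $\partial_u h$ and check that the leftover $x$-derivatives involve only $V_\eps'$ times $o_\nu'$, $o_\nu''$, which are bounded. I will also need to keep track of where $W[q]$ enters, to make sure the $\gamma(0)q$ contribution is bounded by $\|\gamma\|_{W^{2,1}}$ via the embedding $W^{1,1}\hookrightarrow L^\infty$.
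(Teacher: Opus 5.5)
Your $L^1$ estimate and the overall scheme (smoothed sign multiplier, differentiation in $x$, Gronwall, following Godlewski--Raviart, which is all the paper's proof refers to) are the right ones. However, the step you flag as the main obstacle is left open, and the remedy you suggest does not close it.

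With $p=\partial_x q_{\nu,\eps}$ as the tracked quantity and your splitting $\partial_x h=\partial_u h\,p+\partial_o h\,o_\nu'+V_\eps q U_\nu'(W[q])\,\partial_xW[q]$, the leftover explicit $x$-dependence is $V_\eps'(o_\nu-q)\,o_\nu'\,q\,U_\nu(W[q])$. What enters $\tfrac{\mathrm d}{\mathrm dt}\lVert p\rVert_{L^1}$ is its total $x$-derivative along the solution, and this contains
\[
V_\eps''\big(o_\nu-q_{\nu,\eps}\big)\,\big(o_\nu'-p\big)\,o_\nu'\,q_{\nu,\eps}\,U_\nu\big(W[q_{\nu,\eps}]\big).
\]
This term has no definite sign. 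Integrating it by parts against $\mathrm{sgn}(p)$ only turns it into point values at the extrema of $q_{\nu,\eps}$. Their alternating sum is controlled by the variation of $x\mapsto V_\eps'(o_\nu-q_{\nu,\eps})$, i.e.\ again by $\lVert V_\eps''\rVert_{L^\infty(\R)}$. Collecting the $u$-derivatives into $\partial_u h$ does not help, because the $q$-dependence of the remainder comes back through the chain rule. Done as proposed, your argument gives a $\nu$-uniform TV bound with $\lVert V_\eps''\rVert_{L^\infty(\R)}=\eps^{-2}$ in the exponent. That suffices for $\nu\searrow0$ at fixed $\eps$, but it is not the stated estimate, where only $\lVert V_\eps'\rVert_{L^\infty(\R)}$ enters.

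The missing idea is to estimate the variation of $r\coloneqq o_\nu-q_{\nu,\eps}$ instead of that of $q_{\nu,\eps}$. The term $-\nu o_\nu''$ in \cref{eq:vis1} gives exactly the needed structure, namely
\[
\partial_t r-\partial_x\Big(V_\eps(r)\,(o_\nu-r)\,U_\nu\big(W[q_{\nu,\eps}]\big)\Big)=\nu\,\partial_{xx}^2 r
\]
without any source term. Split this flux into three pieces: transport in $r$, the explicit part $V_\eps(r)\,o_\nu'\,U_\nu(W)$, and the nonlocal part $V_\eps(r)\,q_{\nu,\eps}\,U_\nu'(W)\,\partial_xW$. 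The total $x$-derivatives of the last two contain only $V_\eps$ and $V_\eps'$, for instance $V_\eps'(r)\,\partial_xr\,o_\nu'U_\nu$, $V_\eps o_\nu''U_\nu$, $V_\eps'(r)\,\partial_xr\,q_{\nu,\eps}U_\nu'\partial_xW$ and $V_\eps q_{\nu,\eps}U_\nu'\partial_{xx}^2W$. $V_\eps''$ survives only in the transport term, which drops out as $\delta\searrow0$ at fixed $\eps$. Testing with a smoothed $\mathrm{sgn}(\partial_xr)$ then gives Gronwall for $\lVert\partial_xr\rVert_{L^1}$ with a rate affine in $\lVert V_\eps'\rVert_{L^\infty}$. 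Finally, $\lvert q_{\nu,\eps}(t,\cdot)\rvert_{\mathrm{TV}}\le\lVert\partial_xr(t,\cdot)\rVert_{L^1}+\lVert o'\rVert_{L^1}$.

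Two smaller points. First, the $\nu o_\nu'''$ term disappears in this formulation. In yours it cannot be integrated by parts against the sign function, but $\nu\lVert o_\nu'''\rVert_{L^1}\le\nu\lVert\varphi_\nu'\rVert_{L^1}\lVert o''\rVert_{L^1}$ is bounded independently of $\nu$. Second, your dominated-convergence step for the vanishing transport terms presupposes $\partial_xq_{\nu,\eps}(t,\cdot)\in L^1(\R)$, since $\{\lvert q\rvert\le\delta\}$ has infinite measure. Instead, write $\eta_\delta'(q)\,q\,\partial_xq=\partial_x\Psi_\delta(q)$ with $\Psi_\delta(\sigma)\coloneqq\int_0^\sigma\tau\,\eta_\delta'(\tau)\,\mathrm d\tau$, so that $\lvert\Psi_\delta(\sigma)\rvert\le\min\{\lvert\sigma\rvert,\delta\}$. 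Then integrate by parts once more; this needs only $L^2$ bounds.
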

\begin{proof}
    Follow e.g.\ the steps in~\cite[p.~63, Theorem 2.3]{godlewski1991}.    
\end{proof}
Next, we look into the corresponding ``time compactness'':
\begin{theorem}[Time compactness of $q_{\nu, \eps}$]\label{thm:tc}
    Let~\cref{as:as1} hold. Then, the family $(q_{\nu, \eps})_{\nu, \eps > 0}$, i.e., the solutions to~\cref{eq:vis1}, satisfies
    \begin{align*}
      \left \lVert \partial_t q_{\nu, \eps}(t, \cdot) \right \rVert_{H^{-2}(\R)} \leq \big( \lVert U \rVert_{L^\infty(J)} +1 \big) \cdot \sqrt{\lVert o \rVert_{L^\infty(\R)} \big(\nu T \lVert o'' \rVert_{L^1(\R)} + \lVert q_0 \rVert_{L^1(\R)} \big)}
    \end{align*}
    for all $t \in (0, T)$, where  $J\coloneqq \left[-\lVert o \rVert_{L^\infty(\R)} -1, \lVert o \rVert_{L^\infty(\R)} + 1\right].$
\end{theorem}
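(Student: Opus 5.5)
The plan is a duality estimate. Since $H^{-2}(\R)$ is the dual of $H^2(\R)$, it suffices to bound $\langle \partial_t q_{\nu,\eps}(t,\cdot),\varphi\rangle$ uniformly over $\varphi\in H^2(\R)$ with $\lVert\varphi\rVert_{H^2(\R)}\le 1$. The solution $q_{\nu,\eps}$ is smooth and decays at infinity (\cref{rem:dec}), so I can test \cref{eq:vis1} with $\varphi$ at fixed $t$ and integrate by parts without boundary terms. This gives
\begin{align*}
\langle \partial_t q_{\nu,\eps}(t,\cdot),\varphi\rangle
&= \int_{\R} V_\eps\big(o_\nu-q_{\nu,\eps}\big)\,q_{\nu,\eps}\,U_\nu\big(W[q_{\nu,\eps}]\big)\,\varphi'\,\mathrm{d}x
+ \nu\int_{\R}\big(q_{\nu,\eps}\,\varphi''-o_\nu''\,\varphi\big)\,\mathrm{d}x .
\end{align*}
No derivative falls on $q_{\nu,\eps}$ or on $V_\eps$. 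This matters because $\lVert V_\eps'\rVert_{L^\infty(\R)}$ blows up as $\eps\searrow 0$ (Remark (vi)); only $\lVert V_\eps\rVert_{L^\infty(\R)}\le 1$ will be used.

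The key ingredient is an $L^2$ bound uniform in $\eps$. By \cref{thm:cp}, $q_{\nu,\eps}$ lies in $J$ and below $o$, so $\lvert q_{\nu,\eps}\rvert\le\lVert o\rVert_{L^\infty(\R)}$ up to the $O(\nu)$ negative part. \cref{thm:bv} bounds the mass. Interpolation then gives
\[
\lVert q_{\nu,\eps}(t,\cdot)\rVert_{L^2(\R)}\le\sqrt{\lVert q_{\nu,\eps}\rVert_{L^\infty}\lVert q_{\nu,\eps}\rVert_{L^1}}\le\sqrt{\lVert o\rVert_{L^\infty(\R)}\big(\nu T\lVert o''\rVert_{L^1(\R)}+\lVert q_0\rVert_{L^1(\R)}\big)},
\]
which is exactly the square root appearing in the statement. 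For the flux term, the argument $W[q_{\nu,\eps}]$ stays in $J$ because $\lVert\gamma\rVert_{L^1}=1$, so $U_\nu$ only samples $U$ on $J$. Cauchy--Schwarz together with $\lVert V_\eps\rVert_{L^\infty(\R)}\le 1$ then bounds this term by $\lVert U\rVert_{L^\infty(J)}\lVert q_{\nu,\eps}\rVert_{L^2}\lVert\varphi'\rVert_{L^2}$. This produces the factor $\lVert U\rVert_{L^\infty(J)}$ in the bound.

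I expect the main obstacle to be the viscous and source contributions, which should give the ``$+1$''. I would first combine them as $\nu\int_{\R}(q_{\nu,\eps}-o_\nu)\varphi''\,\mathrm{d}x$, pairing the source with the density. The piece $\nu\int_{\R} q_{\nu,\eps}\varphi''\,\mathrm{d}x$ is bounded by $\nu\lVert q_{\nu,\eps}\rVert_{L^2}\lVert\varphi''\rVert_{L^2}$, which is of the required form once $\nu$ is controlled by the admissibility range of \cref{thm:cp}. The piece $-\nu\int_{\R} o_\nu''\varphi\,\mathrm{d}x$ has no natural factor of $\lVert q_{\nu,\eps}\rVert_{L^2}$. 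I would try to absorb it through the term $\nu T\lVert o''\rVert_{L^1(\R)}$ already present inside the mass bound of \cref{thm:bv}, reusing how that mass bound was derived from the same source term. I am not confident this is possible: if it fails, the honest result carries an additional $\nu\lVert o''\rVert_{L^2(\R)}$, which is harmless in the limit $\nu\searrow 0$. Finally, I would take the supremum over $\varphi$ to obtain the stated $H^{-2}$ estimate, uniformly in $\eps$ and $t\in(0,T)$.
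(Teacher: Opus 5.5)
Your plan is the paper's own argument. You test against $H^2$ functions and integrate by parts so that no derivative falls on $q_{\nu,\eps}$ or $V_\eps$. You use the interpolation $\lVert q_{\nu,\eps}(t,\cdot)\rVert_{L^2(\R)}\le\sqrt{\lVert q_{\nu,\eps}\rVert_{L^1(\R)}\lVert q_{\nu,\eps}\rVert_{L^\infty(\R)}}$, fed by \cref{thm:cp,thm:bv}, and finish with Cauchy--Schwarz. The one difference is that you keep the full right-hand side of \cref{eq:vis1}. After inserting the equation, the paper writes the diffusion term as $-\partial_{xx}q_{\nu,\eps}v$ without its factor $\nu$ and drops the source $-\nu o_\nu''$ entirely. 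That omission is what produces exactly the stated constant.

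The point you flag is therefore real, and it resolves as follows.

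\emph{The source cannot be absorbed uniformly in $\nu$ (equivalently, for arbitrary $T$).} Take $q_0\equiv 0$, which satisfies (I) and (D), and a non-constant $o$. At $t=0$ the flux and diffusion terms vanish, so $\partial_t q_{\nu,\eps}(t,\cdot)\to-\nu o_\nu''\neq 0$ in $L^2(\R)$ as $t\searrow 0$. Hence $\lVert\partial_t q_{\nu,\eps}(t,\cdot)\rVert_{H^{-2}(\R)}\to\nu\lVert o_\nu''\rVert_{H^{-2}(\R)}>0$, independently of $T$. The claimed bound, $(\lVert U\rVert_{L^\infty(J)}+1)\sqrt{\lVert o\rVert_{L^\infty(\R)}\nu T\lVert o''\rVert_{L^1(\R)}}$, tends to $0$ as $T\searrow 0$. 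Your rewriting as $\nu\int_{\R}(q_{\nu,\eps}-o_\nu)\varphi''\,\mathrm{d}x$ does not help, because $o_\nu\notin L^2(\R)$.

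\emph{For small $\nu$, your absorption idea does work, by scaling alone.} Write $S\coloneqq\sqrt{\lVert o\rVert_{L^\infty(\R)}(\nu T\lVert o''\rVert_{L^1(\R)}+\lVert q_0\rVert_{L^1(\R)})}$, so the stated bound is $(\lVert U\rVert_{L^\infty(J)}+1)S$. The three contributions are bounded as follows:
\begin{itemize}
\item flux term: $\lVert U\rVert_{L^\infty(J)}S$;
\item viscous term: $\nu S$;
\item source term: $\nu\lVert o''\rVert_{L^2(\R)}$, which is finite because (O) gives $o''\in L^1\cap L^\infty$.
\end{itemize}
Set $B\coloneqq\sqrt{\lVert o\rVert_{L^\infty(\R)}T\lVert o''\rVert_{L^1(\R)}}$. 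Since $S\ge\sqrt{\nu}\,B$, the source term is at most $\sqrt{\nu}\,\lVert o''\rVert_{L^2(\R)}B^{-1}S$. The stated estimate therefore holds once $\nu+\sqrt{\nu}\,\lVert o''\rVert_{L^2(\R)}/B\le 1$. If $o''\equiv 0$ there is no source and $\nu\le 1$ suffices. You also need $\nu$ small enough that the negative part in \cref{thm:cp} stays below $\lVert o\rVert_{L^\infty(\R)}$.

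In particular, the ``$+1$'' requires $\nu\le 1$ in any case. The range $\nu<\lVert o'\rVert_{L^\infty(\R)}\lVert U\rVert_{L^\infty(I)}$ from \cref{thm:cp}, which you invoke, does not give this.

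So the statement is true for $\nu\le\nu_0(T,o)$. Alternatively, it holds for all $\nu\le 1$ with your extra term $\nu\lVert o''\rVert_{L^2(\R)}$ added. Either version is uniform in $\eps$ and $t$, which is all \cref{Thm:exc} uses as $\nu\searrow 0$.
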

\begin{proof}
We estimate the norm as follows for \(t\in[0,T]\)
    \begin{align*}
         \left \lVert  \partial_t q_{\nu, \eps}(t, \cdot)\right \rVert_{H^{-2}(\R)}& =\!\!\!\!\!\!\!\! \sup_{\substack{v \in H_0^{2}(\R) \\  \lVert v \rVert_{H^2(\R)} = 1}}\!\!\! \left \lvert \int_{\R}  \partial_t q_{\nu, \eps}(t, x)~\mathrm{d}x \right \rvert
         \intertext{apply Fubini's Theorem and plug in~\cref{eq:nocl},}
         & =\!\!\!\!\!\!\!\! \sup_{\substack{v \in H_0^{2}(\R) \\  \lVert v \rVert_{H^2(\R)} = 1}} \!\!\!\bigg \lvert  \!\int_{\R}\!\! \partial_x\big(V_\eps\big(o_\nu(x) -q_{\nu, \eps}(t,x)\big) q_{\nu, \eps}(t, x)U_\nu\big(W[q_{\nu, \eps}](t, x)\big) \big)v(x)\\
         &\qquad\qquad\qquad
         -\partial_{xx}q_{\nu, \eps}(t, x)v(x) ~\mathrm{d}x  \bigg \rvert,
         \intertext{integrate by parts}
         & = \!\!\!\!\!\!\!\!\sup_{\substack{v \in H_0^{2}(\R) \\  \lVert v \rVert_{H^2(\R)} = 1}}\!\!\!  \bigg \lvert\int_{\R} V_\eps\big(o_\nu(x) -q_{\nu, \eps}(t,x)\big) q_{\nu, \eps}(t, x)U_\nu\big(W[q_{\nu, \eps}](t, x) \big)v'(x)  - q_{\nu, \eps}(t, x)v''(x) ~\mathrm{d}x \bigg \rvert,
        \intertext{applying Hölder's inequality}
         & \leq \sup_{\substack{v \in H_0^{2}(\R) \\  \lVert v \rVert_{H^2(\R)} = 1}}  \lVert V_\eps \rVert_{L^\infty(\R)} \sqrt{\lVert q_{\nu, \eps} \rVert_{L^1(\R)} \lVert q_{\nu, \eps} \rVert_{L^\infty(\R)} } \lVert U \rVert_{L^\infty(J)} \lVert v' \rVert_{L^2(\R)} \\
         & \qquad\qquad\qquad + \sqrt{\lVert q_{\nu, \eps} \rVert_{L^1(\R)} \lVert q_{\nu, \eps} \rVert_{L^\infty(\R)} } \lVert v'' \rVert_{L^2(\R)},
         \intertext{and finally taking advantage of the estimates  in \cref{thm:bv} yields}
         &  \leq \sqrt{\lVert o \rVert_{L^\infty(\R)} \big(\nu T \lVert o'' \rVert_{L^1(\R)} + \lVert q_0 \rVert_{L^1(\R)}} \big)\big( \lVert U \rVert_{L^\infty(J)} +1 \big).
        \end{align*}

This is the claimed inequality.
\end{proof}

Finally, we obtain the main
\begin{theorem}[Existence for~\cref{eq:nocl} and compactness of \texorpdfstring{$q_{\nu, \eps}$}{q}]\label{Thm:exc}
    Let~\cref{as:as1} hold and $\big(q_{\nu, \eps}\big)_{\nu, \eps > 0}$ be the family of solutions to~\cref{eq:vis1,eq:vis3} for some $q_0 \in \mathrm{BV}(\R; \R_{\geq 0})$. Then, the following two statements are true:
    \begin{description}
        \item[a)] For $\eps \in \R_{>0}$, the limit $q_\eps \coloneqq \lim_{\nu \searrow 0} q_{\nu, \eps}$ exists in $C\big([0, T]; L^1_{loc}(\R) \big)$ and is the unique entropy solution to~\cref{eq:nocl}. Moreover, $0 \leq q_\eps < o$ holds a.e.
        \item[b)] If $q_0$ additionally fulfills the requirements of~\cref{thm:oslq}, then, along a subsequence, there exists $q \coloneqq \lim_{\eps \searrow 0} q_\eps$ in $C\big([0, T]; L^1_{loc}(\R)\big)$, which is the solution to a nonlocal-discontinuous PDE that is to be specified in~\cref{Thm:lim}. 
    \end{description}
\end{theorem}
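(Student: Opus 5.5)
Both parts rest on a Helly/Aubin--Lions--Simon compactness argument in $C([0,T];L^1_{loc}(\R))$, fed by the uniform bounds of \cref{thm:cp}, \cref{thm:bv} and \cref{thm:tc}. We then pass to the limit in the weak (entropy) formulation.

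\textbf{Part a).} Fix $\eps>0$. By \cref{thm:cp}, $-\nu C\le q_{\nu,\eps}\le f_\eps<o$, uniformly in $\nu$. By \cref{thm:bv}, the $L^\infty((0,T);L^1(\R))$ and spatial total-variation norms are bounded uniformly in $\nu\in(0,1]$; for fixed $\eps$ the factor $\lVert V_\eps'\rVert_{L^\infty(\R)}=\eps^{-1}$ is a harmless constant. By \cref{thm:tc}, $\partial_t q_{\nu,\eps}$ is bounded in $L^\infty((0,T);H^{-2}(\R))$. On each compact $K\subset\R$, $\mathrm{BV}(K)$ embeds compactly into $L^1(K)$, which embeds continuously into $H^{-2}(K)$. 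A version of Simon's lemma (or Kružkov's time-continuity interpolation using the TV bound) then gives relative compactness of $(q_{\nu,\eps})_\nu$ in $C([0,T];L^1_{loc}(\R))$. A diagonal argument over an exhaustion by compacts produces a subsequence with limit $q_\eps$.

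For the limit passage, the nonlocal term $W[q_{\nu,\eps}]$ converges locally uniformly, since $\gamma\in W^{2,1}$ and the $L^1$ bound make $W[\cdot]$ continuous from $L^1_{loc}$ with uniform $L^\infty$ bounds (the tails are controlled by the $L^1$ bound together with $\gamma\in L^1$). Moreover, $U_\nu\to U$ uniformly on $J$, and $o_\nu\to o$ in $W^{1,\infty}$. Applying the standard Kružkov entropy inequalities to the parabolic equation and letting $\nu\to0$, we find that $q_\eps$ is an entropy solution of a local conservation law with flux $(x,q)\mapsto V_\eps(o(x)-q)\,q\,U(W[q_\eps](t,x))$. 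The viscous term $\nu(\partial_{xx}q-o_\nu'')$ vanishes in the distributional sense. Uniqueness follows by a doubling-of-variables argument. Freezing $w=W[q_\eps]$ gives a Kružkov $L^1$-stability estimate with respect to the flux's $x$-dependence. The difference $W[q_\eps]-W[\tilde q_\eps]$ is controlled by $\lVert\gamma\rVert_{L^\infty}\lVert q_\eps-\tilde q_\eps\rVert_{L^1}$ times the TV bound, so Gronwall closes the estimate. Uniqueness in turn shows that the whole family converges, not only a subsequence. The bounds $0\le q_\eps<o$ pass to the a.e.\ limit: the lower bound $-\nu C\to0$, and the upper bound $f_\eps<o$ holds independently of $\nu$.

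\textbf{Part b).} This is where the main difficulty lies. The BV bound of \cref{thm:bv} degenerates as $\eps\searrow0$ through the factor $\lVert V_\eps'\rVert_{L^\infty(\R)}$, so compactness must come from the one-sided Lipschitz bound. \cref{thm:oslq} gives $\partial_x q_{\nu,\eps}\ge\hat K$ uniformly for $\nu\ge\eps$ small. This survives the limit $\nu\to0$ along $\nu$ with $\eps\le\nu$, in the difference-quotient form for $q_\eps$. A function with $q\ge -\lVert o\rVert_{L^\infty(\R)}$, $q\le o$ and $\partial_x q\ge\hat K$ has local total variation on $[-R,R]$ bounded by $2\lVert o\rVert_{L^\infty(\R)}+2|\hat K|R$. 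This follows by writing $q-\hat K x$ as a nondecreasing function and bounding its variation by its oscillation. Combining this $\eps$-uniform local TV bound with the $\eps$-uniform time estimate of \cref{thm:tc} and the same compactness lemma as in part a), we extract a subsequence $q_\eps\to q$ in $C([0,T];L^1_{loc}(\R))$.

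The fluxes $V_\eps(o-q_\eps)q_\eps U(W[q_\eps])$ are bounded in $L^\infty$, so along a further subsequence the factor $V_\eps(o-q_\eps)$ converges weak-$*$ to some $\tilde V\in[0,1]$. Since $q_\eps\to q$ strongly and $W[q_\eps]\to W[q]$ locally uniformly, the products converge weakly. Hence $q$ solves the limiting equation weakly, and the identification of $\tilde V$ is left to \cref{Thm:lim}. The delicate point I would check first is that the constraint $\eps\le\nu$ in \cref{thm:oslq} is compatible with first sending $\nu\to0$ at fixed $\eps$. I would handle this by a diagonal choice $\nu=\nu(\eps)\ge\eps$, using the $L^1$ closeness $q_{\nu(\eps),\eps}\approx q_\eps$ from part a) to transfer the OSL bound.
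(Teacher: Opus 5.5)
Your overall route is the paper's: compactness from \cref{thm:bv} at fixed $\eps$, and from the one-sided Lipschitz bound uniformly in $\eps$. Each is combined with the time estimate of \cref{thm:tc}, Simon's lemma, the compact embedding of $\mathrm{BV}$ on bounded intervals and a diagonal argument. Then come a Kružkov-type passage to the limit for the entropy formulation, and \cref{thm:cp} for $0\le q_\eps<o$.

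The step that fails is your treatment of the constraint in \cref{thm:oslq}. At fixed $\eps$, the sentence that the OSL bound ``survives the limit $\nu\to0$ along $\nu$ with $\eps\le\nu$'' is self-contradictory. The diagonal repair does not close this. It requires, for each $\eps$, some $\nu(\eps)\ge\eps$ with $q_{\nu(\eps),\eps}$ close to $q_\eps$ in $C([0,T];L^1_{loc}(\R))$. That is quantitative control of the vanishing-viscosity limit at $\nu\approx\eps$, uniformly in $\eps$. Part a) is purely qualitative and gives no threshold on $\nu$. The only $\nu$-uniform spatial bound available at fixed $\eps$, \cref{thm:bv}, grows like $\exp(CT/\eps)$ because $\lVert V_\eps'\rVert_{L^\infty(\R)}=1/\eps$. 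So nothing guarantees that such a $\nu(\eps)$ exists. Moreover, closeness of a single approximant would transfer at best precompactness of the family, not the OSL inequality for $q_\eps$ itself.

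What actually removes the issue is the proof of \cref{thm:oslq}. It explicitly assumes $\nu<\lVert o'\rVert_{L^\infty(\R)}\lVert U\rVert_{L^\infty(J)}\,\eps$ in order to absorb the viscous terms. So the operative condition is $\nu\lesssim\eps$; the ``$\eps\le\nu$'' in its statement is inconsistent with its own proof. This also matches the threshold $\bar\nu(\eps)$ in \cref{Thm:OSLV}. Hence at fixed $\eps$ the bound with the $(\nu,\eps)$-independent constant $\hat K$ holds for all sufficiently small $\nu$. The difference-quotient inequality is stable under $L^1_{loc}$ convergence (pass to an a.e.\ convergent subsequence). Therefore $q_\eps(t,\cdot)$ inherits it for every $t$, directly from part a).

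With this, your local total variation bound, the $\eps$-uniform $L^1$ bound of \cref{thm:bv}, \cref{thm:tc} and Simon's lemma give b) as in the paper. A minor point on your uniqueness sketch: for $x$-dependent fluxes the $L^1$-stability estimate also needs $\lVert\partial_x(W[q_\eps]-W[\tilde q_\eps])(t,\cdot)\rVert_{L^1(\R)}\le(\gamma(0)+\lVert\gamma'\rVert_{L^1(\R_{\geq 0})})\lVert q_\eps(t,\cdot)-\tilde q_\eps(t,\cdot)\rVert_{L^1(\R)}$. With this term included, Gronwall still closes.
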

\begin{proof}
    \cref{thm:oslq,thm:tc,thm:bv} give the desired compactness in $C\big([0, T]; L^2(\Omega) \big)$ for every compact interval $\Omega \subseteq \R$ if they are combined with~\cite[Theorem 6]{Simon1986CompactSI} and~\cite[Theorem 3.47]{Ambrosio2000} (for the embedding $\mathrm{BV}(\Omega) \hookrightarrow \hookrightarrow L^2(\Omega)$). A diagonal argument yields compactness in $C\big([0, T]; L^2_{loc}(\R)\big)$ and thus in $C\big([0, T]; L^1_{loc}(\R)\big)$. \\
    To prove that $q_\eps$ is actually an entropy solution, Kruzhkov's arguments~\cite{Kruzkov1970} can be imitated and the bounds $0 \leq q_\eps < o$ are obtained from~\cref{thm:cp}.
\end{proof}

We want to remind the reader that a) has already been shown for a more general equation under similar regularity assumptions~\cite{Amorim2015nonlocalnumerical}.\\

By simply applying the limit, we obtain our main result that defines the solution to the obstacle problem.
\begin{theorem}[Limiting problem as $\varepsilon \searrow 0$]\label{Thm:lim}
    Take the assumptions as in \cref{Thm:exc} and consider a subsequence such that $q_\eps \rightarrow q$ in $C\big([0, T];L^1_{\text{loc}}(\R)\big)$ and $V_\eps(o-q_\eps) \overset{\ast}{\rightharpoonup} \tilde{V}$ in $L^\infty\big((0, T) \times \R \big)$. Then, $q$ fulfills the Cauchy problem
    \begin{align*}
\partial_t q(t, x) + \partial_x \big(\tilde{V}(t, x) q(t, x) U\big(W[q](t, x)\big) \big) &= 0, && (t, x) \in (0, T) \times \R ,\\
q(0, x) &= q_0(x), && x \in \R,
\end{align*}
    in the sense defined in \cite{BULEK2011,Bulek2017}. Moreover, one can show $\tilde{V}(t, x) = 1$ a.e.\ \((t,x)\in (0,T)\times\R\) where $q(t, x) \neq o(x)$. 
\end{theorem}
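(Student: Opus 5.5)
The plan is to pass to the limit $\eps\searrow 0$ in the weak formulation of \cref{eq:nocl,eq:nocl1} satisfied by $q_\eps$, using the compactness from \cref{Thm:exc} b). First I will record the uniform bounds: by \cref{thm:cp} and \cref{Thm:exc} a), $0\le q_\eps<o$, so $\lVert q_\eps\rVert_{L^\infty}\le \lVert o\rVert_{L^\infty(\R)}$; in addition $0\le V_\eps(o-q_\eps)\le 1$. Along the chosen subsequence, $q_\eps\to q$ in $C([0,T];L^1_{loc}(\R))$. By Banach--Alaoglu we may also assume $V_\eps(o-q_\eps)\overset{\ast}{\rightharpoonup}\tilde V$ with $\tilde V\in[0,1]$ a.e.

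Next I will show that the nonlocal term converges strongly. Since $\gamma\in L^1$ and $\lVert q_\eps\rVert_{L^1}$ is uniformly bounded by \cref{thm:bv}, for every compact set $K$ we get
\[
\lvert W[q_\eps]-W[q]\rvert(t,x)\le \lVert\gamma\rVert_{L^\infty}\int_{K_R}\lvert q_\eps-q\rvert\,\mathrm{d}y+\lVert o\rVert_{L^\infty(\R)}\!\int_R^\infty\gamma .
\]
The tail on the right is small for large $R$, so $W[q_\eps]\to W[q]$ locally uniformly. As $U$ is Lipschitz on $J$, it follows that $q_\eps U(W[q_\eps])\to qU(W[q])$ strongly in $L^1_{loc}$, with a uniform $L^\infty$ bound. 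Combining this with the weak-$\ast$ convergence of $V_\eps(o-q_\eps)$ (strong times weak-$\ast$ converges weakly), for each test function $\phi\in C_c^\infty([0,T)\times\R)$ we obtain
\[
\int\!\!\int q\,\partial_t\phi+\tilde V\,q\,U(W[q])\,\partial_x\phi\,\mathrm{d}x\,\mathrm{d}t+\int q_0\phi(0,\cdot)\,\mathrm{d}x=0 .
\]
This is the weak formulation. The initial datum is attained because the convergence holds in $C([0,T];L^1_{loc})$. To match the notion of \cite{BULEK2011,Bulek2017}, where the discontinuous coefficient enters as a given $L^\infty$ function, I then only need to check that the structural requirements there ($\tilde V\in[0,1]$ and $q\in[0,o]$) are met.

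For the characterization $\tilde V=1$ wherever $q\ne o$, I will work on the set $A_\delta:=\{(t,x): o(x)-q(t,x)>\delta\}$. By strong $L^1_{loc}$ convergence we can pass to a further subsequence with $q_\eps\to q$ a.e. By Egorov's theorem, on every bounded subset of $A_\delta$ outside a set of small measure we then have $o-q_\eps>\delta/2$ for all small $\eps$. There $V_\eps(o-q_\eps)\ge 1-\exp(-\delta/(2\eps))\to 1$ uniformly. Testing the weak-$\ast$ limit against indicators of such sets gives $\tilde V=1$ a.e. on $A_\delta$. Taking the union over $\delta=1/n$ covers $\{q<o\}$, and since $q\le o$ a.e. this is exactly the set $\{q\ne o\}$.

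The main obstacle is making the solution notion precise. Identifying the limit flux as $\tilde V\,q\,U(W[q])$ needs only the strong/weak product argument, but entropy-type conditions from \cite{BULEK2011,Bulek2017} involve nonlinear functions of $q_\eps$ multiplied by $V_\eps$. Those would require either the Oleinik bound from \cref{thm:oslq} to persist in the limit, which it does since it is uniform in $\eps$, or a restriction to the weak formulation above. I would try first to carry the OSL bound $\partial_x q\ge\hat K$ to the limit and use it as the admissibility criterion.
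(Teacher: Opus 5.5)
Your argument is correct and is exactly what the paper means when it says the result follows ``by simply applying the limit''. The locally uniform convergence of $W[q_\eps]$ makes $q_\eps U(W[q_\eps])$ converge strongly in $L^1_{loc}$, and testing this against the weak-$\ast$ convergence of $V_\eps(o-q_\eps)$ gives the weak formulation. Your a.e.-convergence/Egorov argument also supplies the proof of $\tilde V=1$ on $\{q<o\}$, which the paper only asserts; together with $\tilde V\in[0,1]$ this identifies $\tilde V\,q\,U(W[q])$ as a selection of the Heaviside-type discontinuous flux. The paper does not verify any entropy inequality either, so the admissibility question you raise in your last paragraph is left open there as well.
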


\section{Characterization of \texorpdfstring{$\lim_{\eps \searrow 0} V_{\eps}$}{Vε}}
To get a better picture of the limit $\lim_{\eps\searrow 0} q_\eps$, one sided Lipschitz bounds of $V_\eps(o-q)$ are useful:
\begin{theorem}[OSL bounds for $V_\eps(o_\nu-q_{\nu, \eps})$]\label{Thm:OSLV}
    If assumption~\cref{as:as1} holds and $q_0$ is one sided Lipschitz continuous from below, then, for every $\eps \in \R_{>0}$, there is $\bar \nu(\eps) > 0$ and
    
    $\hat{C}= \hat{C}\big(U_{\min}, \lVert U\rVert_{W^{2, \infty}(J)}, \lVert \gamma \rVert_{W^{2, 1}(\R_{\geq 0})}, c_{q_0}, \lVert o \rVert_{W^{2, 1}(\R_{\geq 0})}  \big) \in \R$ such that
    \begin{align*}
    & \tfrac{V_\eps(o(x) -q_{\nu, \eps}(t, x)) - V_\eps(o(y)-q_{\nu, \eps}(t, y))}{x-y} \leq \hat{C} 
    \end{align*}
    for all $t \in (0, T)$ and $0 \leq \nu \leq \bar \nu(\eps)$. Here, $\hat{K}$ denotes the lower one sided Lipschitz bound from~\cref{thm:oslq} and $J \coloneqq \left[-\lVert o \rVert_{L^\infty(\R)} -1, \lVert o \rVert_{L^\infty(\R)} + 1\right]$.
\end{theorem}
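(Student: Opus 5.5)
We follow the maximum-principle strategy already used for \cref{thm:cp,thm:oslq}, now applied to the spatial derivative of the relaxation factor. Set $w_{\nu,\eps}(t,x) \coloneqq V_\eps\big(o_\nu(x)-q_{\nu,\eps}(t,x)\big)$. This function is $C^2$ and bounded by $1$, and by \cref{rem:dec} its derivatives decay at $\pm\infty$ wherever $o_\nu'$ does. The one-sided bound in the statement is then equivalent to $\partial_x w_{\nu,\eps}\le \hat C$. We therefore study
\[
M_{\nu,\eps}(t)\coloneqq \sup_{x\in\R}\partial_x w_{\nu,\eps}(t,x)=\sup_{x\in\R} V_\eps'\big(o_\nu(x)-q_{\nu,\eps}(t,x)\big)\big(o_\nu'(x)-\partial_x q_{\nu,\eps}(t,x)\big).
\]
Because $V_\eps'>0$, at a maximiser $x^\ast(t)$ with $M_{\nu,\eps}(t)>0$ we have $o_\nu'-\partial_xq_{\nu,\eps}>0$. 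The lower bound $\partial_xq_{\nu,\eps}\ge\hat K$ from \cref{thm:oslq} only gives $o'-\partial_xq\le \lVert o'\rVert_{L^\infty}-\hat K$, which is not enough on its own because $V_\eps'=\eps^{-1}\e^{-(o-q)/\eps}$ blows up. This is why we need a genuine evolution estimate rather than simply combining earlier bounds. (We also take the estimate for $o$ instead of $o_\nu$ as a limiting step $\nu\searrow0$, which introduces only $O(\nu)$ errors.)

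The first step is to derive the PDE satisfied by $w=w_{\nu,\eps}$. Since $V_\eps'=\eps^{-1}(1-V_\eps)$ and $V_\eps''=-\eps^{-1}V_\eps'$, the function $w$ satisfies a closed equation. Writing $u\coloneqq U_\nu(W[q_{\nu,\eps}])$ and using \cref{eq:vis1}, we get
\[
\partial_t w = -V_\eps'\,\partial_t q = V_\eps'\,\partial_x\big(w\,q\,u\big) - \nu V_\eps'\big(\partial_{xx}q-o_\nu''\big).
\]
The viscous term becomes $\nu\partial_{xx}w$ plus a term $\nu V_\eps''(o_\nu'-\partial_xq)^2\le0$ with a favourable sign. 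Differentiating once more in $x$ and evaluating at $x^\ast(t)$, where $\partial_{xx}w=0$ and $\partial_{xxx}w\le0$, yields a Riccati-type differential inequality for $M=M_{\nu,\eps}$.

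The key structural point, which mirrors the one-sided Lipschitz argument for $q$ but with reversed monotonicity, comes from expanding $V_\eps'\partial_x(w q u)$. This produces the term $V_\eps'\,q\,u\,\partial_x w$. Its $x$-derivative contains $-q u\,(\partial_xw)^2\cdot(\text{positive})$ through $\partial_x V_\eps'=V_\eps''(o'-\partial_xq)=-\eps^{-1}\partial_x w$, together with $u\,\partial_xw\cdot\partial_x w\,q$-type products. We aim to collect these into
\[
M'(t)\le -c_1\,M(t)^2\,\tfrac{q\,u}{\eps}+c_2M(t)+c_3,
\]
where $c_1>0$ uses $u\ge U_{\min}$, and where $q$ is bounded below near the obstacle because $w$ is only far from $1$ when $q$ is close to $o\ge o_{\min}$. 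The constants $c_2,c_3$ involve $\lVert o\rVert_{W^{2,\infty}}$, $\lVert U\rVert_{W^{2,\infty}(J)}$, $\lVert\gamma\rVert_{W^{2,1}}$ (through $\partial_xW$ and $\partial_{xx}W$, bounded via $\lVert q\rVert_{L^\infty}\le\lVert o\rVert_{L^\infty}$ from \cref{thm:cp}) and $\hat K$. The condition $\nu\le\bar\nu(\eps)$ is used, as in \cref{thm:oslq}, to absorb the residual viscous terms involving $o_\nu'''$ and $V_\eps''$ whose coefficients are of order $\nu/\eps$.

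The main obstacle is this bookkeeping: every term carrying a factor $\eps^{-1}$ must either carry the good sign or be multiplied by $M$ so that the quadratic term dominates. The terms without such a factor, in particular $w\,q\,\partial_x u$ and $o''$-contributions, must be bounded uniformly in $\eps$. I would first try splitting into the cases $o-q\ge\eps|\log\eps|$, where $V_\eps'$ is $O(1)$ and $M$ is trivially bounded, and the near-obstacle case, where $q\ge o_{\min}/2$ and the negative quadratic dominates. Once the inequality is established, the ODE comparison principle \cite[Lemma 1.1]{teschlbook} gives $M(t)\le\max\{M(0),\text{largest root}\}$. Here $M(0)$ is controlled by $\partial_xq_{0,\nu}\ge c_{q_0}$ together with (D), which yields $V_\eps'(o-q_0)\le\eps^{-1}\e^{-d_o/\eps}$, bounded uniformly in $\eps$. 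This produces $\hat C$.
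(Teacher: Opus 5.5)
Your proposal follows essentially the same route as the paper. Both run a maximum-principle and Riccati argument for $\sup_x \partial_x V_\eps(o_\nu-q_{\nu,\eps})$ and split into two regions. Away from the obstacle, $V_\eps'$ is bounded and $\hat K$ gives the bound directly. Near the obstacle, $q_{\nu,\eps}\ge o_{\min}/2$ makes $-\tfrac{q u}{\eps}M^2$ dominate, above an $\eps$-independent threshold, all remaining terms; these include constant ones such as $V_\eps' w\,u\,o_\nu''$, which are harmless because they are at most of order $\eps^{-1}$. The argument then closes with ODE comparison and the restriction $\nu\le\bar\nu(\eps)$.

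One slip to correct concerns the extra viscous term in the equation for $w_{\nu,\eps}$. It is $-\nu V_\eps''(o_\nu'-\partial_x q_{\nu,\eps})^2\ge 0$, not $\le 0$. After differentiation it contributes $+\tfrac{\nu}{\eps^2}V_\eps'(o_\nu'-\partial_x q_{\nu,\eps})^3\le \tfrac{\nu}{\eps^3}\big(\lVert o'\rVert_{L^\infty(\R)}-\hat K\big)^3$ at the maximiser. This is exactly how the paper absorbs it, so $\bar\nu(\eps)$ scales like $\eps^3$ rather than $\eps$.
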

\begin{proof}
    We set for $t \in (0, T)$.
    \begin{equation}
    \begin{aligned}
    m_{\nu, \eps}(t)& \coloneqq \sup_{y \in \R} \partial_y \big( V_\eps\big(o_\nu(y)-q_{\nu, \eps}(t, y)\big)\big)\\
    &=\sup_{y \in \R} V_\eps'\big(o_\nu(y)-q_{\nu, \eps}(t, y)\big)\big(o_\nu'(y) - \partial_y q_{\nu, \eps}(t, y)\big)
    \end{aligned}
    \label{eq:defi_mu_nu_eps}
    \end{equation}
It is sufficient to find an upper bound for it following the arguments of~\cref{thm:cp}. Set $t \in (0, T)$, let $x \in \R$ (depending on $t$) such that
    \begin{align*}
        m_{\nu, \eps}(t) = V_\eps'\big(o_\nu(x)-q_{\nu, \eps}(t, x)\big)(o_\nu'(x) - \partial_{2} q_{\nu, \eps}(t, x)),
    \end{align*}
    excluding the case $x \in \lbrace -\infty, \infty \rbrace$, where the right hand side vanishes anyway. 
    Further, note that optimality conditions for \(x\in\R\) being a point where the supremum is attained are:
    \begin{align*}
        0 & = V_\eps''\big(o_\nu(x)-q_{\nu, \eps}(t, x)\big)\big(o_\nu'(x) - \partial_2 q_{\nu, \eps}(t, x)\big)^2  + V_\eps'\big(o_\nu(x)-q_{\nu, \eps}(t, x)\big)(o_\nu''(x) - \partial_{2}^{2} q_{\nu, \eps}(t, x)) \\
        0 &\geq  V_\eps''' \big(o_\nu(x)-q_{\nu, \eps}(t, x)\big)(o_\nu'(x) - \partial_2 q_{\nu, \eps}(t, x))^3 \\
        & \quad + 3 V_\eps''(o_\nu(x)-q_{\nu, \eps}(t, x))(o_\nu'(x) - \partial_2 q_{\nu, \eps}(t, x))(o_\nu''(x) - \partial_{2}^{2} q_{\nu, \eps}(t, x)) \\
        & \quad + V_\eps'(o_\nu(x)-q_{\nu, \eps}(t, x))(o_\nu'''(x) - \partial_{2}^{3} q_{\nu, \eps}(t, x)).
    \end{align*}
    Abbreviating\( V_\eps^{(k)} \coloneqq V_\eps^{(k)}\big(o_\nu(x)-q_{\nu, \eps}(t, x)\big)\)    
    and leaving out the arguments of the other involved functions, i.e.,  \((t,x)\) or \(x\), we distinguish the two following cases.
\begin{description}
        \item[\textbf{Case 1:} ($q_{\nu, \eps} \leq \tfrac{o_\nu}{2}$)] In this case, the desired bound can be estimated directly (not using the abbreviated writing, with $(t, x)$ as in described above):
        \begin{align*}
             V_\eps'\big(o_\nu(x) -q_{\nu, \eps}(t, x) \big) (o_\nu'(x) - \partial_x q_{\nu, \eps}(t, x)) & = \tfrac{1}{\eps} \e^{-\tfrac{o_\nu(x) -q_{\nu, \eps}(t, x)}{\eps}}\big(o_\nu'(x) - \partial_x q_{\nu, \eps}(t, x)\big) \\
            & \leq \tfrac{1}{\eps} \exp\left(-\tfrac{o_{\min}}{2\eps} \right) ( o_\nu'(x) - \partial_x q_{\nu, \eps}(t, x)) \\
            \intertext{and using \cref{thm:oslq}, where $\hat{K} \in \R$ denotes the OSL bound for $\partial_x q$ gives}
            & \leq  \tfrac{1}{\eps} \exp\left(-\tfrac{o_{\min}}{2\eps} \right) \big( - \hat{K} + \lVert o' \rVert_{L^\infty(\R)} \big) \\
            & \leq \tfrac{2}{o_{\min}} \exp(-1) \big( - \hat{K} + \lVert o' \rVert_{L^\infty(\R)} \big),
        \end{align*}
        so that in this case, we indeed obtain the upper bound on the spatial derivative of \(V_{\eps}(o-q_{\nu,\eps})\), uniform in \(\eps,\nu\in\R_{>0}\).
        \item[\textbf{Case 2:} ($q_{\nu, \eps} > \tfrac{o_\nu}{2}$)] After some computations and estimates, we find:
        \begin{align*}
            m_{\nu, \eps} & \leq -2U_{\min}m_{\nu, \eps}^2+  m_{\nu, \eps}^2\big( -\tfrac{1}{2\eps}o_\nu \big)  \\
            & \quad + m_{\nu, \eps}\Big( \tfrac{1}{\eps}  \lVert U \rVert_{L^\infty(J)} + \tfrac{1}{\eps}  \lVert U \rVert_{L^\infty(J)} \lVert o' \rVert_{L^\infty(\R)} \\
            & \qquad + \tfrac{1}{\eps} \lVert V_\eps \rVert_{L^\infty(\R)} \lVert o \rVert_{L^\infty(\R)}^2 \lVert U' \rVert_{L^\infty(J)} \lvert \gamma\rvert_{\mathrm{TV}(\R_{\geq 0})} + \tfrac{2}{\eps} \lVert o' \rVert_{L^\infty(\R)} \lVert U \rVert_{L^\infty(J)} \\
            & \qquad + \lVert o \rVert_{L^\infty(\R)}^2 \lVert U' \rVert_{L^\infty(J)} \lvert \gamma \rvert_{\mathrm{TV}(\R_{\geq 0})} + \lvert \gamma \rvert_{\mathrm{TV}(\R_{\geq 0})} \lVert o \rVert_{L^\infty(\R)} \\
            & \qquad + \tfrac{1}{\eps} \lVert o \rVert_{L^\infty(\R)}^2\lVert U' \rVert_{L^\infty(J)} \lvert \gamma \rvert_{\mathrm{TV}(\R_{\geq 0})} -  \lVert U'' \rVert_{L^\infty(J)} \lvert \gamma \rvert_{\mathrm{TV}(\R_{\geq 0})}^2 \lVert o \rVert_{L^\infty(\R)}^2\\
            & \qquad -  \lVert U' \rVert_{L^\infty(J)} \lVert \gamma \rVert_{W^{2, 1}(\R_{\geq 0})} \lVert o \rVert_{L^\infty(\R)}  \Big) \\
            & \quad + \tfrac{2}{\eps} \lVert o'' \rVert_{L^\infty(\R)} \lVert U \rVert_{L^\infty(J)} + \tfrac{1}{\eps} \lVert o'' \rVert_{L^\infty(\R)}  \lVert U \rVert_{L^\infty(J)}  \\
            & \quad + \tfrac{2}{\eps}  \lVert o \rVert_{L^\infty(\R)}^2 \lVert U' \rVert_{L^\infty(J)} \lvert \gamma \rvert_{\mathrm{TV}(\R_{\geq 0})} + \tfrac{1}{\eps} \lVert o \rVert_{L^\infty(\R)}^3 \lVert U'' \rVert_{L^\infty(J)} \lvert \gamma \rvert_{\mathrm{TV}(\R_{\geq 0})} \\
            & \quad + \tfrac{1}{\eps}  \lVert o \rVert_{L^\infty(\R)}^2 \lVert U' \rVert_{L^\infty(J)}  \lVert \gamma \rVert_{W^{2, 1}(\R_{\geq 0})}  \\
            & \quad - \tfrac{\nu}{(V_\eps')^2 \eps} m_{\nu, \eps}^3 + \tfrac{ 2\nu }{\eps^2 (V_\eps')^2} m_{\nu, \eps}^3.
         \end{align*}
         Now, we set (recalling that \(\|V_{\eps}\|_{L^{\infty}(\R)}\leq 1\))
         \begin{align*}
             b_1 & \coloneqq -\tfrac{1}{2\eps}o_\nu \\
             b_2 & \coloneqq \tfrac{1}{\eps}  \lVert U \rVert_{L^\infty(J)} + \tfrac{1}{\eps}  \lVert U \rVert_{L^\infty(J)} \lVert o' \rVert_{L^\infty(\R)} \\
            & \qquad + \tfrac{1}{\eps} \lVert o \rVert_{L^\infty(\R)}^2 \lVert U' \rVert_{L^\infty(J)} \lvert \gamma\rvert_{\mathrm{TV}(\R_{\geq 0})} + \tfrac{2}{\eps} \lVert o' \rVert_{L^\infty(\R)} \lVert U \rVert_{L^\infty(J)} \\
            & \qquad + \lVert o \rVert_{L^\infty(\R)}^2 \lVert U' \rVert_{L^\infty(J)} \lvert \gamma \rvert_{\mathrm{TV}(\R_{\geq 0})} +  \lvert \gamma \rvert_{\mathrm{TV}(\R_{\geq 0})} \lVert o \rVert_{L^\infty(\R)} \\
            & \qquad + \tfrac{1}{\eps} \lVert o \rVert_{L^\infty(\R)}^2\lVert U' \rVert_{L^\infty(J)} \lvert \gamma \rvert_{\mathrm{TV}(\R_{\geq 0})} -  \lVert U'' \rVert_{L^\infty(J)} \lvert \gamma \rvert_{\mathrm{TV}(\R_{\geq 0})}^2 \lVert o \rVert_{L^\infty(\R)}^2\\
            & \qquad -  \lVert U' \rVert_{L^\infty(J)} \lVert \gamma \rVert_{W^{2, 1}(\R_{\geq 0})} \lVert o \rVert_{L^\infty(\R)} \\
            b_3 & \coloneqq  \tfrac{2}{\eps} \lVert o'' \rVert_{L^\infty(\R)} \lVert U \rVert_{L^\infty(J)} + \tfrac{1}{\eps}  \lVert o'' \rVert_{L^\infty(\R)}  \lVert U \rVert_{L^\infty(J)}  \\
            & \quad\quad + \tfrac{2}{\eps}  \lVert o \rVert_{L^\infty(\R)}^2 \lVert U' \rVert_{L^\infty(J)} \lvert \gamma \rvert_{\mathrm{TV}(\R_{\geq 0})} + \tfrac{1}{\eps} \lVert o \rVert_{L^\infty(\R)}^3 \lVert U'' \rVert_{L^\infty(J)} \lvert \gamma \rvert_{\mathrm{TV}(\R_{\geq 0})} \\
            & \quad  \quad + \tfrac{1}{\eps}\lVert o \rVert_{L^\infty(\R)}^2 \lVert U' \rVert_{L^\infty(J)}  \lVert \gamma \rVert_{W^{2, 1}(\R_{\geq 0})}  
         \end{align*}
         so that the previously derived inequality becomes
         \begin{align*}
         \partial_{t}m_{\nu, \eps} & \leq -2U_{\min}m_{\nu, \eps}^2+  b_{1}m_{\nu, \eps}^2+ m_{\nu, \eps}b_{2} +b_{3}- \tfrac{\nu}{(V_\eps')^2 \eps} m_{\nu, \eps}^3 + \tfrac{ 2\nu }{\eps^2 (V_\eps')^2} m_{\nu, \eps}^3.
        \end{align*}
         As \(b_1<0\) and of order \(\tfrac{1}{\eta}\) as is \(b_{2}\) it becomes evident that for large enough \(m_{\nu,\eps}\), we have
         \[
        b_{1}m_{\nu,\eps}^{2}+b_{2}m_{\nu,\eps}\leq 0
         \]
         Let $\hat{b}$ be the upper bound on \(m_{\nu,\eps}\) where this is true. Then, we obtain
         \begin{align*}
         m_{\nu, \eps}'(t) & \leq -U_{\min} m_{\nu, \eps}^2- \tfrac{\nu}{(V_\eps')^2 \eps} m_{\nu, \eps}^3 + \tfrac{2\nu}{\eps^2 (V_\eps')^2} m_{\nu, \eps}^3, 
         \intertext{and as the second summand on the right hand side is negative, we can further estimate}
         & \leq -U_{\min} m_{\nu, \eps}^2 + \tfrac{2\nu}{\eps^2 (V_\eps')^2} m_{\nu, \eps}^3.
         \intertext{Furthermore, we can recall the definition of \(m_{\nu,\eps}\) in \cref{eq:defi_mu_nu_eps} and plug it in to arrive at}
         & \leq -U_{\min} m_{\nu, \eps}^2 + \tfrac{2\nu}{\eps^2 }V_\eps' (o_\nu' - \partial_x q_{\nu, \eps})^3
         \intertext{and taking advantage of $V_\eps' \leq\tfrac{1}{\eps}$ and the aforementioned OSL bound $\hat{K}$ for $q_{\nu, \eps}$ from below as derived in \cref{thm:oslq} gives}
         & \leq -U_{\min} m_{\nu, \eps}^2 + \tfrac{2\nu}{\eps^3 } (o_\nu' - \partial_x q_{\nu, \eps})^3 \\
         & \leq -U_{\min} m_{\nu, \eps}^2 + \tfrac{2\nu}{\eps^3 } \big(\lVert o_\nu' \rVert_{L^\infty(\R)} - \hat{K}\big)^3
         \intertext{and assuming $\nu \leq \tfrac{\eps^3 \left(\lVert o_\nu' \rVert_{L^\infty(\R)} - \hat{K}\right)^3}{2}$ which is valid following \cref{thm:oslq}} 
         & \leq  -U_{\min} m_{\nu, \eps}^2 + 1. 
         \end{align*}
Recalling that \(U_{\min}>0\) according to \cref{as:as1}, it holds for $m_{\nu, \eps} > \tfrac{1}{\sqrt{U_{\min}}}$ that $m_{\nu, \eps}' \leq 0$, meaning \(\mu_{\nu,\eps}\) cannot grow beyond \(\tfrac{1}{\sqrt{U_{\min}}}\).
    \end{description}
    Combining \textbf{cases 1 and 2} yields altogether
   \begin{align*}
       m_{\nu, \eps}(t) \leq \max \left \lbrace \hat{b}, \tfrac{1}{\sqrt{U_{\min}}}, \tfrac{2}{o_{\min}} \exp(-1) \big( - \hat{K} + \lVert o' \rVert_{L^\infty(\R)} \big)\right \rbrace
       \forall t \in (0, T)
   \end{align*}
   concluding the proof.
   
\end{proof}

We can deduce the
\begin{remark}[Structure of $\lim_{\eps \searrow 0}V_\eps(o-q_{\eps})$]\label{rem:sv}
Let~\cref{as:as1} and the assumptions from~\cref{Thm:exc} hold, $q_\eps$ be the solution to~\cref{eq:nocl,eq:nocl1} for $\eps \in \R_{>0}$ and $q$ be from~\cref{Thm:lim}.\\
Now, similar to~\cite[Section 5]{obstacle}, we can at least \emph{formally} derive what is an implicit expression for $\tilde{V}$, which we assume to be the {weak\;$-^\ast$} limit in $L^\infty\big((0, T) \times \R\big)$ of $V_\eps(o-q_\eps)$ along the same subsequence where $\big(q_\eps\big)_{\eps > 0}$ convergences. \\
Fix $t \in (0, T)$. Moreover, formally let $E(t) \coloneqq \lbrace x \in  \R: q(t, x) = o(x)  \rbrace$ the coincidence set that is assumed to be \emph{connected}. Then, if all the involved expressions are regular enough, there exists $c \in \R$ such that
\[
\tilde{V}[q] (t, x) o(x) U\big(W[q](t, x) \big) = c,
\]
for a.e. $x \in E(t)$.\\
From this, using the OSL bounds on $v$, it can be derived that $\tilde{V}$ does not necessarily have to be zero, where $o = q$. For example, if $E (t) $ is an interval $[a, b]$ for some $t \in (0, T)$, then it can be shown that $c = o(b)V\big( W[q](t, b) \big)$---again, if data is sufficiently regular. This means that a ``full stopping effect'' can be avoided. The exact steps in this argument can be found in~\cite[Section 5]{obstacle}.
\end{remark}

\section{Numerical illustrations}
To validate our theoretical results, we conduct a numerical study. We choose $o:\R \rightarrow \R$, $o(x) \coloneqq -\exp\big(-(x-1)^2\big) + 1.2$ for all $x \in \R$ as the obstacle and study either the initial datum $q_0^1(x) \coloneqq \chi_{\left[-1.5, -1\right]}(x)-x\chi_{\left(-1, 0 \right]}(x)$  or $\chi_{[-1.5, -1]}(x)$ for all $x \in \R$ (cf.~\cite{obstacle} for both initial data). \emph{For the entire section}, the nonlocal kernel is chosen to be $\gamma:\R_{\geq 0} \rightarrow \R_{\geq0}$, $\gamma(x) \coloneqq \exp(-x)$ for all $x \in \R$. The velocity is either the LWR-type $U_1(x) = \tfrac{2}{3}\left( \tfrac{3}{2}-x\right)$ for $x \in \R$ or linear $U_2(x) = 1$ for $x \in \R$, as specified later.

Similar to~\cite{obstacle}, a Godunov scheme is employed. We can only use~\cref{eq:nocl} as an approximation, as we do not have an algorithm for the limiting problem found in~\cref{Thm:lim} at hand. Due to the convergence result for $\eps$,~\cref{eq:nocl} for $\eps$ is a suitable approximation. The nonlocality, however, has to be considered: The Godunov scheme~\cite[(32) and (34)]{Zhang2003} is the groundwork and the nonlocal operator is approximated using the scheme in~\cite{Friedrich2018}. The spatial step size is chosen to be $\Delta x = \tfrac{1}{5000}$. Boundary conditions are of homogeneous Neumann type.\\
\subsection{Numerical convergence as \texorpdfstring{$\eps \searrow 0$}{ε→0} for the nonlinear nonlocal equation}
First, we want to observe how the solution to~\cref{eq:nocl} behaves for different $\eps$, if the initial datum is chosen as $q_0^1$ (which is in accordance with~\cref{as:as1}) and the velocity is chosen to be $U_1$. In~\cref{fig:conv}, we can observe that convergence seems to occur and that the solution moves closer to the obstacle for smaller $\eps$. 
\begin{figure}
    \centering
    \includegraphics[clip,trim=0 0 7 5 0]{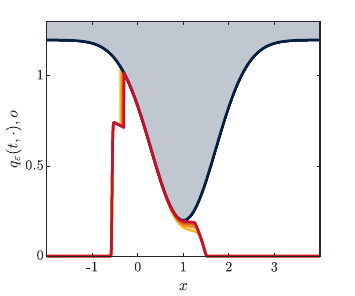}
    \includegraphics[clip,trim= 21 0 5 5 0]{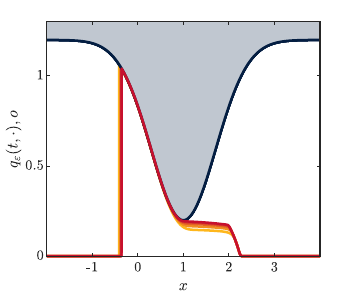}
    \caption{The solution $q_\eps$ to the nonlocal nonlinear equation~\cref{eq:nocl} with initial datum $q_0^1$ for $\eps = \textcolor{forange1}{2^{-6}}, \textcolor{forange2}{2^{-7}}, \textcolor{forange3}{2^{-8}}, \textcolor{forange4}{2^{-9}}, \textcolor{forange5}{2^{-10}}$ and $t = 1.5$ (LHS) and $t=2.25$ (RHS).}
    \label{fig:conv}
\end{figure}
Indeed, there is no reason why $q$ from~\cref{Thm:lim} should remain strictly bounded away from the obstacle. This means that it could indeed be hit after the limiting process $\eps \searrow 0$. 

\subsection{Comparison of linear, nonlinear and nonlocal dynamics}
It is interesting to observe the behavior of the solution when different dynamics are chosen rather than the nonlocal one that was considered in this article. In fact, for some $U:\R \rightarrow \R$ we can instead consider the the corresponding ``local'' problem as well, i.e., the Cauchy problem
\begin{align}
    \partial_t q_\eps  + \partial_x \big(V_\eps\big(o(x) -q_\eps\big)U_i\big(q_\eps \big)q_\eps  \big) &= 0, && (t, x) \in (0, T) \times \R, \label{eq:local} \\
    q(0, x) & = q_0(x), && x \in \R \notag
\end{align}
for $i \in \lbrace 1, 2 \rbrace$, too. The corresponding obstacle problem was intensively studied in \cite{obstacle}. In Figure~\cref{fig:comp}, we compare the solutions to ~\cref{eq:local} with initial datum $q_0^1$ and $\eps = \tfrac{1}{1024}$. In the top row, we see the solution to~\cref{eq:nocl}, in the middle row the solution to~\cref{eq:local} with $U_1(x) = \tfrac{2}{3}\big(\tfrac{3}{2}-x \big)$ for all $x \in \R$ and in the bottom row the solution to~\cref{eq:local} with $U_2(x) = 1$ (observed in~\cite{obstacle}) for all $x \in \R$.  
\begin{figure}
    \centering
    \includegraphics[clip,trim=0 17 5 5 0]{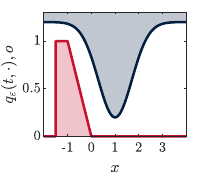}
    \includegraphics[clip,trim=20 17 5 5 0]{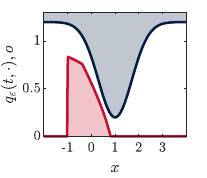}
    \includegraphics[clip,trim=20 17 5 5 0]{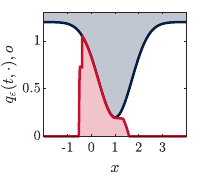}
    \includegraphics[clip,trim=20 17 0 5 0]{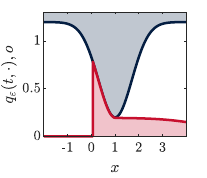} \\
    \includegraphics[clip,trim=0 17 5 5 0]{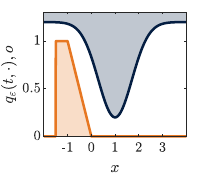}
    \includegraphics[clip,trim=20 17 5 5 0]{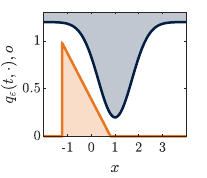}
    \includegraphics[clip,trim=20 17 5 5 0]{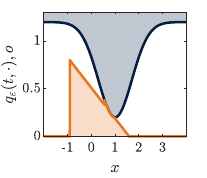}
    \includegraphics[clip,trim=20 17 0 5 0]{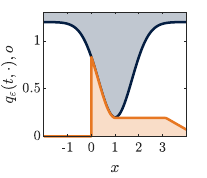} \\
    \includegraphics[clip,trim=0 0 5 5 0]{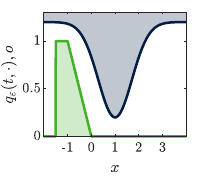}
    \includegraphics[clip,trim=20 0 5 5 0]{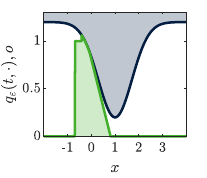}
    \includegraphics[clip,trim=20 0 5 5 0]{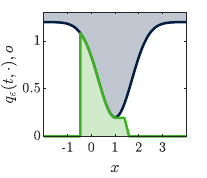}
    \includegraphics[clip,trim=20 0 0 5 0]{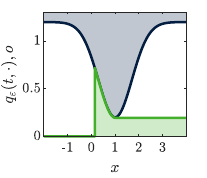}
    \caption{The solution to the nonlocal and nonlinear~\cref{eq:nocl} (top row) for $U_1(x) = \tfrac{2}{3}\big(\tfrac{3}{2}-x \big)$ ($x \in \R$), to the local and nonlinear~\cref{eq:local} for $U_1(x) = \tfrac{2}{3}\big(\tfrac{3}{2}-x \big) $ ($x \in \R$) (middle row) and the local and linear~\cref{eq:local} for $U_2(x) = 1$ ($x \in \R$) (bottom row), where $\eps = \tfrac{1}{1024}$ for times $t = 0, 0.81, 1.59, 4.5$ from left to right. Every time, the initial datum is $q_0^1$.}
    \label{fig:comp}
\end{figure}~\\
The nonlocal solution shows its characteristic smoothing effect at the top~\cite[Figure~1]{Coclite2022SL}, while the nonlinear equation develops a rarefaction wave. In the linear case, the front just moves towards the obstacle with constant speed until it is hit. In every case however, the obstacle is adhered to and a ``backward shock''~\cite[p.~4]{obstacle} can be observed as soon as the solution comes close to the obstacle. This is the case in the third picture of the top and middle row and the second picture of the bottom row.

\subsection{One sided Lipschitz bounds}
This section wants to further examine how the results in~\cref{thm:oslq,Thm:OSLV} can be visualized for $\eps = \tfrac{1}{1024}$.~\cref{fig:osl2d} provides a bird's eye view of the surface plot of $V_\eps(o-q_\eps)$ (left) and $q_\eps$ (right), when the initial datum is $q_0^1$. Here $q_\eps$ solves~\cref{eq:nocl,eq:nocl1} and $U_1(x) = \tfrac{2}{3}\big(\tfrac{3}{2} - x\big)$ for all $x \in \R$. It can be observed that the proclaimed one sided Lipschitz bounds exist for this example:
\begin{figure}
    \centering
    \includegraphics[clip,trim=0 0 7 5 0]{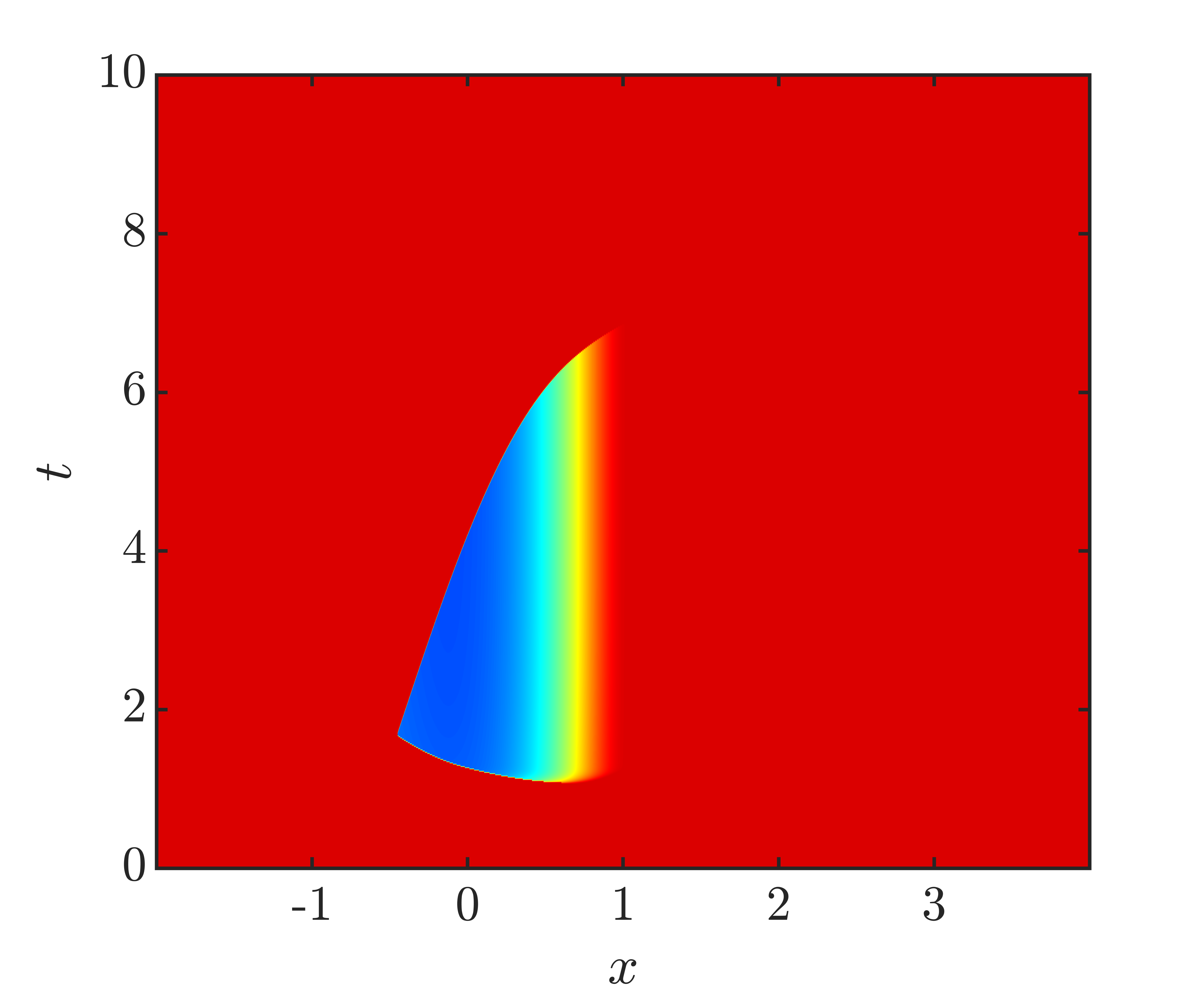}
    \includegraphics[clip,trim= 21 0 5 5 0]{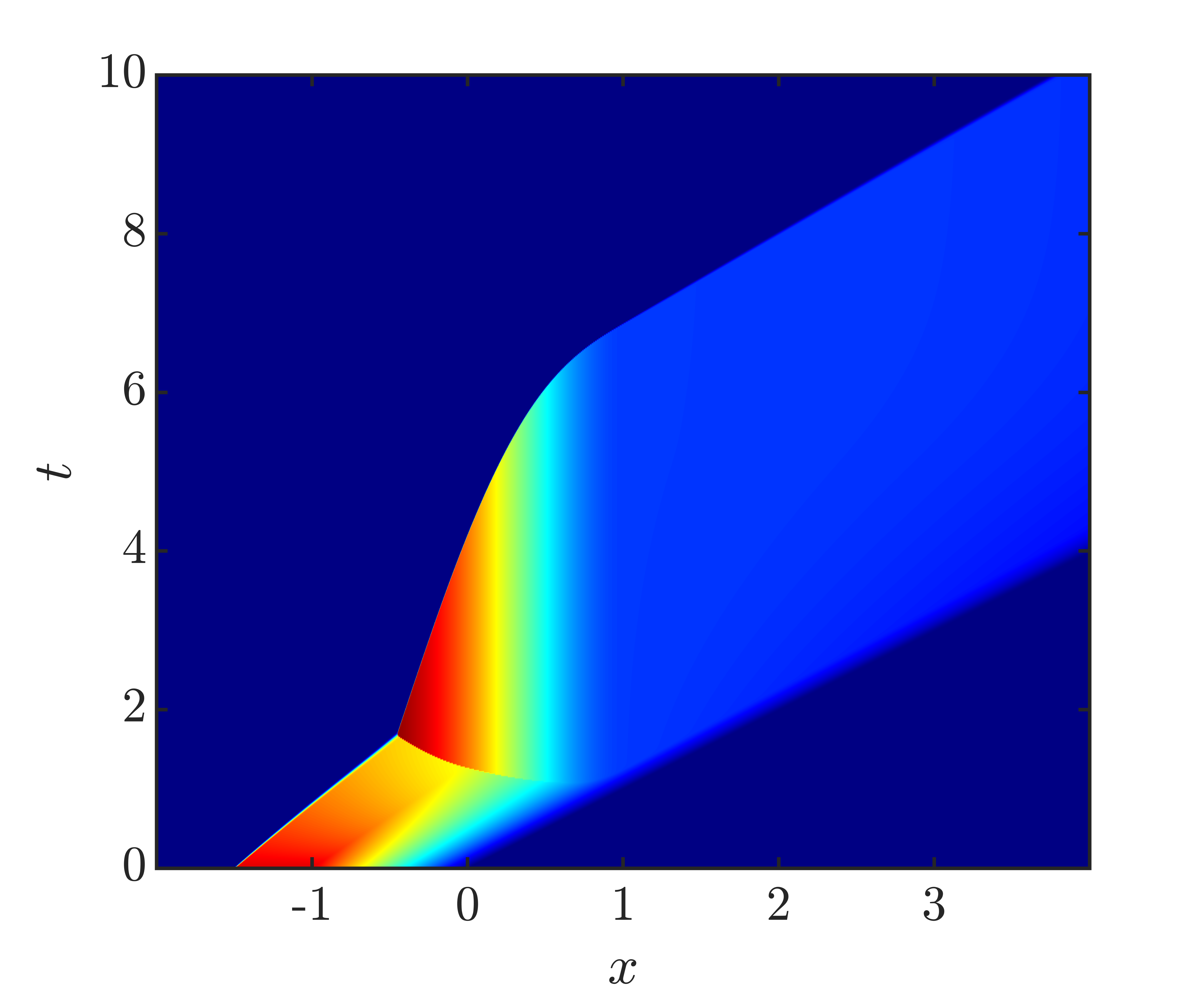}
    \caption{A surface plot of $V_\eps(o-q_\eps)$, where $q_\eps$ solves~\cref{eq:nocl,eq:nocl1} (left) for and $q_\eps$ (right) if the initial datum is $q_0^1$, the velocity field is $U_1(x) = \tfrac{2}{3}\big(\tfrac{3}{2}-x \big)$ ($x \in \R$) and $\eps = \tfrac{1}{1024}$. Colorbar: $0$\includegraphics[clip,trim=7.7 12 6 10 0]{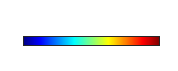}$1.1$}
    \label{fig:osl2d}
\end{figure}~\\
 For every hypothetical horizontal line intersecting with one of the two plots, the color gradient is ``smooth'' either from blue to red (in the case of $V_\eps(o-q_\eps)$) or from red to blue (when $q_\eps$) at least from one side. A similar observation also indicates that no one sided Lipschitz bounds in $t$ are to be expected for $V_\eps(o-q_\eps)$.
 
 Moreover, if the initial datum is not one sided Lipschitz bounded from below, like $q_0^2$, then,  no one sided Lipschitz develops when $t>0$ (cf.~\cref{fig:NOSL}). This can be readily derived from already existing theory on nonlocal conservation laws~\cite{keimernonlocalbalance2017}.
\begin{figure}
    \centering
    \includegraphics[clip,trim=0 0 5 5 0]{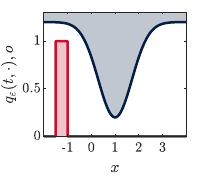}
    \includegraphics[clip,trim=20 0 5 5 0]{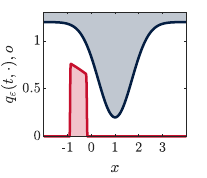}
    \includegraphics[clip,trim=20 0 5 5 0]{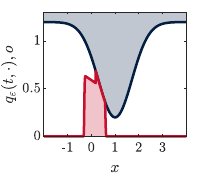}
    \includegraphics[clip,trim=20 0 5 5 0]{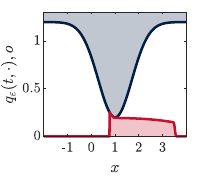}
    \caption{The solution $q_\eps$ to the nonlocal nonlinear equation~\cref{eq:nocl,eq:nocl1} for $U_1(x) = \tfrac{2}{3}\big(\tfrac{3}{2}-x \big)$ for times $t=0, 0.81, 1.59, 4.50$ (from left to right) for initial datum $q_0^2$.}
    \label{fig:NOSL}
\end{figure}

\section{Open problems}
Interesting future contributions to the topic of the obstacle problem for nonlocal scalar conservation laws could be about uniqueness of the discontinuous Cauchy problem mentioned in \cref{Thm:lim} and \cref{rem:sv}. Closely related to this is the question how the coincidence set $E(t)$ (cf.~\cref{rem:sv}) can be characterized for its regularity.

\section*{Acknowledgement}
LP and JR have been supported by the DFG -- Project-ID 416229255 -- SFB 1411. AK and JR have been supported by the DFG -- Project-ID 547096773.

\bibliographystyle{plain}
\bibliography{obstacle}

@article {2bogelein,
    AUTHOR = {B\"{o}gelein, Verena and Duzaar, Frank and Mingione, Giuseppe},
     TITLE = {Degenerate problems with irregular obstacles},
   JOURNAL = {J. Reine Angew. Math.},
  FJOURNAL = {Journal f\"{u}r die Reine und Angewandte Mathematik. [Crelle's
              Journal]},
    VOLUME = {650},
      YEAR = {2011},
     PAGES = {107--160},
      ISSN = {0075-4102,1435-5345},
   MRCLASS = {35K86 (35B65 35J87 42B37)},
  MRNUMBER = {2770559},
MRREVIEWER = {Siegfried\ Carl},
       DOI = {10.1515/CRELLE.2011.006},
       URL = {https://doi.org/10.1515/CRELLE.2011.006},
}

@incollection {1rodrigues2,
    AUTHOR = {Rodrigues, J. F.},
     TITLE = {On the hyperbolic obstacle problem of first order},
      NOTE = {Dedicated to the memory of Jacques-Louis Lions},
   JOURNAL = {Chinese Ann. Math. Ser. B},
  FJOURNAL = {Chinese Annals of Mathematics. Series B},
    VOLUME = {23},
      YEAR = {2002},
    NUMBER = {2},
     PAGES = {253--266},
      ISSN = {0252-9599,1860-6261},
   MRCLASS = {35L85 (35B35 35R35)},
  MRNUMBER = {1924141},
MRREVIEWER = {J\'{a}n\ Lov\'{\i}\v{s}ek},
       DOI = {10.1142/S0252959902000249},
}

@article {1rodrigues,
    AUTHOR = {Rodrigues, Jos\'{e} Francisco},
     TITLE = {On hyperbolic variational inequalities of first order and some
              applications},
   JOURNAL = {Monatsh. Math.},
  FJOURNAL = {Monatshefte f\"{u}r Mathematik},
    VOLUME = {142},
      YEAR = {2004},
    NUMBER = {1-2},
     PAGES = {157--177},
      ISSN = {0026-9255,1436-5081},
   MRCLASS = {35L85 (35F30 35R35 49J40)},
  MRNUMBER = {2065027},
MRREVIEWER = {Weimin\ Han},
       DOI = {10.1007/s00605-004-0238-3},
       URL = {https://doi.org/10.1007/s00605-004-0238-3},
}

@article {1amorim,
    AUTHOR = {Amorim, Paulo and Neves, Wladimir and Rodrigues, Jos\'{e}
              Francisco},
     TITLE = {The obstacle-mass constraint problem for hyperbolic
              conservation laws. {S}olvability},
   JOURNAL = {Ann. Inst. H. Poincar\'{e} C Anal. Non Lin\'{e}aire},
  FJOURNAL = {Annales de l'Institut Henri Poincar\'{e} C. Analyse Non
              Lin\'{e}aire},
    VOLUME = {34},
      YEAR = {2017},
    NUMBER = {1},
     PAGES = {221--248},
      ISSN = {0294-1449,1873-1430},
   MRCLASS = {35L65 (35R35)},
  MRNUMBER = {3592685},
MRREVIEWER = {Alberto\ Valli},
       DOI = {10.1016/j.anihpc.2015.11.003},
       URL = {https://doi.org/10.1016/j.anihpc.2015.11.003},
}

@article {1levi,
    AUTHOR = {Levi, Laurent},
     TITLE = {Obstacle problems for scalar conservation laws},
   JOURNAL = {M2AN Math. Model. Numer. Anal.},
  FJOURNAL = {M2AN. Mathematical Modelling and Numerical Analysis},
    VOLUME = {35},
      YEAR = {2001},
    NUMBER = {3},
     PAGES = {575--593},
      ISSN = {0764-583X,1290-3841},
   MRCLASS = {35L65 (35L85 35R35)},
  MRNUMBER = {1837085},
MRREVIEWER = {J.\ Chrastina},
       DOI = {10.1051/m2an:2001127},
       URL = {https://doi.org/10.1051/m2an:2001127},
}

@article {1colombo,
    AUTHOR = {Colombo, Rinaldo M. and Goatin, Paola},
     TITLE = {A well posed conservation law with a variable unilateral
              constraint},
   JOURNAL = {J. Differential Equations},
  FJOURNAL = {Journal of Differential Equations},
    VOLUME = {234},
      YEAR = {2007},
    NUMBER = {2},
     PAGES = {654--675},
      ISSN = {0022-0396,1090-2732},
   MRCLASS = {35L65 (35B30 35L45 90B20)},
  MRNUMBER = {2300671},
MRREVIEWER = {Michael\ Herty},
       DOI = {10.1016/j.jde.2006.10.014},
       URL = {https://doi.org/10.1016/j.jde.2006.10.014},
}

@article {1andreianov2,
    AUTHOR = {Andreianov, Boris and Donadello, Carlotta and Rosini,
              Massimiliano Daniele},
     TITLE = {A second-order model for vehicular traffics with local point
              constraints on the flow},
   JOURNAL = {Math. Models Methods Appl. Sci.},
  FJOURNAL = {Mathematical Models and Methods in Applied Sciences},
    VOLUME = {26},
      YEAR = {2016},
    NUMBER = {4},
     PAGES = {751--802},
      ISSN = {0218-2025,1793-6314},
   MRCLASS = {35L65 (90B20)},
  MRNUMBER = {3460622},
       DOI = {10.1142/S0218202516500172},
       URL = {https://doi.org/10.1142/S0218202516500172},
}

@article {1garavello2,
    AUTHOR = {Garavello, Mauro and Villa, Stefano},
     TITLE = {The {C}auchy problem for the {A}w-{R}ascle-{Z}hang traffic
              model with locally constrained flow},
   JOURNAL = {J. Hyperbolic Differ. Equ.},
  FJOURNAL = {Journal of Hyperbolic Differential Equations},
    VOLUME = {14},
      YEAR = {2017},
    NUMBER = {3},
     PAGES = {393--414},
      ISSN = {0219-8916,1793-6993},
   MRCLASS = {35L65 (35L45 90B20)},
  MRNUMBER = {3703014},
MRREVIEWER = {Andrea\ Tosin},
       DOI = {10.1142/S0219891617500138},
       URL = {https://doi.org/10.1142/S0219891617500138},
}

@article {1garavello,
    AUTHOR = {Garavello, M. and Goatin, P.},
     TITLE = {The {A}w-{R}ascle traffic model with locally constrained flow},
   JOURNAL = {J. Math. Anal. Appl.},
  FJOURNAL = {Journal of Mathematical Analysis and Applications},
    VOLUME = {378},
      YEAR = {2011},
    NUMBER = {2},
     PAGES = {634--648},
      ISSN = {0022-247X,1096-0813},
   MRCLASS = {35L65 (35L45 90B20)},
  MRNUMBER = {2773272},
MRREVIEWER = {Michael\ Herty},
       DOI = {10.1016/j.jmaa.2011.01.033},
       URL = {https://doi.org/10.1016/j.jmaa.2011.01.033},
}

@article {1dymski,
    AUTHOR = {Dymski, Nikodem S. and Goatin, Paola and Rosini, Massimiliano
              D.},
     TITLE = {Existence of {${BV}$} solutions for a non-conservative
              constrained {A}w-{R}ascle-{Z}hang model for vehicular traffic},
   JOURNAL = {J. Math. Anal. Appl.},
  FJOURNAL = {Journal of Mathematical Analysis and Applications},
    VOLUME = {467},
      YEAR = {2018},
    NUMBER = {1},
     PAGES = {45--66},
      ISSN = {0022-247X,1096-0813},
   MRCLASS = {35L65 (35L50 90B20)},
  MRNUMBER = {3834794},
       DOI = {10.1016/j.jmaa.2018.07.025},
       URL = {https://doi.org/10.1016/j.jmaa.2018.07.025},
}

@book{evans,
  title = {Measure Theory and Fine Properties of Functions,  Revised Edition},
  ISBN = {9781482242393},
  DOI = {10.1201/b18333},
  publisher = {Chapman and Hall/CRC},
  address = {Boca Raton},
  author = {Evans,  Lawrence Craig and Gariepy,  Ronald F.},
  year = {2015},
  month = apr 
}

@book{godlewski1991,
  title={Hyperbolic Systems of Conservation Laws},
  author={Godlewski, E. and Raviart, P.A.},
  lccn={73180239},
  series={Math{\'e}matiques \& applications},
  url={https://books.google.de/books?id=X3qyvAEACAAJ},
  year={1991},
  publisher={Ellipses},
  address = {Paris},
}

@book{evanspartial,
  title={Partial Differential Equations},
  author={Evans, L.C.},
  isbn={9780821849743},
  lccn={2009044716},
  series={Graduate studies in mathematics},
  url={https://books.google.de/books?id=Xnu0o\_EJrCQC},
  year={2010},
  publisher={American Mathematical Society}
}

@article{Simon1986CompactSI,
  title = {Compact sets in the space \({L}^{p}(0,{T}; {B})\)},
  volume = {146},
  ISSN = {1618-1891},
  url = {http://dx.doi.org/10.1007/BF01762360},
  DOI = {10.1007/bf01762360},
  number = {1},
  journal = {Annali di Matematica Pura ed Applicata},
  publisher = {Springer Science and Business Media LLC},
  author = {Simon,  Jacques},
  year = {1986},
  month = Dec,
  pages = {65–96}
}

@article{Kruzkov1970,
doi = {10.1070/SM1970v010n02ABEH002156},
url = {https://dx.doi.org/10.1070/SM1970v010n02ABEH002156},
year = {1970},
month = {feb},
publisher = {},
volume = {10},
number = {2},
pages = {217},
author = {S N Kružkov},
title = {FIRST ORDER QUASILINEAR EQUATIONS IN SEVERAL INDEPENDENT VARIABLES},
journal = {Mathematics of the USSR-Sbornik}}

@article{keimernonlocalbalance2017,
title = {Existence, uniqueness and regularity results on nonlocal balance laws},
journal = {Journal of Differential Equations},
volume = {263},
number = {7},
pages = {4023-4069},
year = {2017},
issn = {0022-0396},
doi = {https://doi.org/10.1016/j.jde.2017.05.015},
url = {https://www.sciencedirect.com/science/article/pii/S0022039617302759},
author = {Alexander Keimer and Lukas Pflug}
}

@book{alt2016eng,
  title = {Linear Functional Analysis},
  ISBN = {9781447172802},
  ISSN = {2191-6675},
  DOI = {10.1007/978-1-4471-7280-2},
  journal = {Universitext},
  publisher = {Springer},
  address = {London},
  author = {Alt,  Hans Wilhelm},
  year = {2016}
}

@book{brezis2010functional,
  title = {Functional Analysis,  Sobolev Spaces and Partial Differential Equations},
  ISBN = {9780387709147},
  DOI = {10.1007/978-0-387-70914-7},
  publisher = {Springer},
  address = {New York},
  author = {Brezis,  Haim},
  year = {2010}
}

@article{Zhang2003,
  title = {Hyperbolic conservation laws with space-dependent flux: I. Characteristics theory and Riemann problem},
  volume = {156},
  ISSN = {0377-0427},
  url = {http://dx.doi.org/10.1016/S0377-0427(02)00880-4},
  DOI = {10.1016/s0377-0427(02)00880-4},
  number = {1},
  journal = {Journal of Computational and Applied Mathematics},
  publisher = {Elsevier BV},
  author = {Zhang,  Peng and Liu,  Ru-Xun},
  year = {2003},
  month = jul,
  pages = {1–21}
}

@book{teschlbook,
  title = {Ordinary Differential Equations and Dynamical Systems},
  ISBN = {9780821891049},
  ISSN = {1065-7339},
  url = {http://dx.doi.org/10.1090/gsm/140},

  journal = {Graduate Studies in Mathematics},
  publisher = {American Mathematical Society},
  address = {Providence},
  author = {Teschl,  Gerald},
  year = {2012},
  month = aug 
}

@article{Bayen2022,
  title = {Modeling Multilane Traffic with Moving Obstacles by Nonlocal Balance Laws},
  volume = {21},
  ISSN = {1536-0040},
  url = {http://dx.doi.org/10.1137/20M1366654},
  DOI = {10.1137/20m1366654},
  number = {2},
  journal = {SIAM Journal on Applied Dynamical Systems},
  publisher = {Society for Industrial & Applied Mathematics (SIAM)},
  author = {Bayen,  Alexandre and Friedrich,  Jan and Keimer,  Alexander and Pflug,  Lukas and Veeravalli,  Tanya},
  year = {2022},
  month = jun,
  pages = {1495–1538}
}

@article{FernandezReal2020,
  title = {On the obstacle problem for the 1D wave equation},
  volume = {2},
  ISSN = {2640-3501},
  url = {http://dx.doi.org/10.3934/mine.2020026},
  DOI = {10.3934/mine.2020026},
  number = {4},
  journal = {Mathematics in Engineering},
  publisher = {American Institute of Mathematical Sciences (AIMS)},
  author = {Fernández-Real,  Xavier and Figalli,  Alessio},
  year = {2020},
  pages = {584–597}
}

@article{Milgrom2002,
  title = {Envelope Theorems for Arbitrary Choice Sets},
  volume = {70},
  ISSN = {1468-0262},
  url = {http://dx.doi.org/10.1111/1468-0262.00296},
  DOI = {10.1111/1468-0262.00296},
  number = {2},
  journal = {Econometrica},
  publisher = {The Econometric Society},
  author = {Milgrom,  Paul and Segal,  Ilya},
  year = {2002},
  month = mar,
  pages = {583–601}
}

@article{obstacle,
  doi = {10.48550/ARXIV.2405.07829},
  url = {https://arxiv.org/abs/2405.07829},
  author = {Amorim,  Paulo and Keimer,  Alexander and Pflug,  Lukas and Rodestock,  Jakob},
  title = {The obstacle problem for linear scalar conservation laws with constant velocity},
  publisher = {arXiv},
  year = {2024},
  copyright = {Creative Commons Attribution 4.0 International}
}

@article{Chiarello2018,
  title = {Global entropy weak solutions for general non-local traffic flow models with anisotropic kernel},
  volume = {52},
  ISSN = {1290-3841},
  url = {http://dx.doi.org/10.1051/m2an/2017066},
  DOI = {10.1051/m2an/2017066},
  number = {1},
  journal = {ESAIM: Mathematical Modelling and Numerical Analysis},
  publisher = {EDP Sciences},
  author = {Chiarello,  Felisia Angela and Goatin,  Paola},
  year = {2018},
  month = jan,
  pages = {163–180}
}

@book{rudin1976,
  title = {Principles of Mathematical Analysis},
  publisher = {McGraw-Hill},
  author = {Rudin, Walter},
  year = {1976},
  edition = {3}, 
  address = {New York},
}

@book{Ambrosio2000,
  title = {Functions of Bounded Variation},
  ISBN = {9781383020311},
  DOI = {10.1093/oso/9780198502456.003.0003},
  booktitle = {Functions of Bounded Variation and Free Discontinuity Problems},
  publisher = {Oxford University},
  author = {Ambrosio,  Luigi and Fusco,  Nicola and Pallara,  Diego},
  address = {Oxford},
  year = {2000},
}

@article{Amorim2015nonlocalnumerical,
  title = {On the Numerical Integration of Scalar Nonlocal Conservation Laws},
  volume = {49},
  ISSN = {1290-3841},
  url = {http://dx.doi.org/10.1051/m2an/2014023},
  DOI = {10.1051/m2an/2014023},
  number = {1},
  journal = {ESAIM: Mathematical Modelling and Numerical Analysis},
  publisher = {EDP Sciences},
  author = {Amorim,  Paulo and Colombo,  Rinaldo M. and Teixeira,  Andreia},
  year = {2015},
  month = jan,
  pages = {19–37}
}

@article{Keimer2018bounded,
  title = {Nonlocal Scalar Conservation Laws on Bounded Domains and Applications in Traffic Flow},
  volume = {50},
  ISSN = {1095-7154},
  url = {http://dx.doi.org/10.1137/18M119817X},
  DOI = {10.1137/18m119817x},
  number = {6},
  journal = {SIAM Journal on Mathematical Analysis},
  publisher = {Society for Industrial & Applied Mathematics (SIAM)},
  author = {Keimer,  Alexander and Pflug,  Lukas and Spinola,  Michele},
  year = {2018},
  month = jan,
  pages = {6271–6306}
}

@article{Chiarello2019stability,
  title = {Stability estimates for non-local scalar conservation laws},
  volume = {45},
  ISSN = {1468-1218},
  url = {http://dx.doi.org/10.1016/j.nonrwa.2018.07.027},
  DOI = {10.1016/j.nonrwa.2018.07.027},
  journal = {Nonlinear Analysis: Real World Applications},
  publisher = {Elsevier BV},
  author = {Chiarello,  Felisia Angela and Goatin,  Paola and Rossi,  Elena},
  year = {2019},
  month = feb,
  pages = {668–687}
}

@article{Santambrogio2018,
  title = {Crowd motion and evolution {PDEs} under density constraints},
  volume = {64},
  ISSN = {2267-3059},
  url = {http://dx.doi.org/10.1051/proc/201864137},
  DOI = {10.1051/proc/201864137},
  journal = {ESAIM: Proceedings and Surveys},
  publisher = {EDP Sciences},
  author = {Santambrogio,  Filippo},
  editor = {Carassus,  Laurence and Darbas,  Marion and Gayraud,  Ghislaine and Goubet,  Olivier and Salmon,  Stéphanie},
  year = {2018},
  pages = {137–157}
}

@article{Friedrich2018,
  title = {A {G}odunov type scheme for a class of {LWR} traffic flow models with non-local flux},
  volume = {13},
  ISSN = {1556-181X},
  DOI = {10.3934/nhm.2018024},
  number = {4},
  journal = {Networks and Heterogeneous Media},
  publisher = {American Institute of Mathematical Sciences (AIMS)},
  author = {Friedrich,  Jan and Kolb,  Oliver and G\"{o}ttlich,  Simone},
  year = {2018},
  pages = {531–547}
}

@article{Coclite2022BV,
  title = {On existence and uniqueness of weak solutions to nonlocal conservation laws with BV kernels},
  volume = {73},
  ISSN = {1420-9039},
  url = {http://dx.doi.org/10.1007/s00033-022-01766-0},
  DOI = {10.1007/s00033-022-01766-0},
  number = {6},
  journal = {Zeitschrift f\"{u}r angewandte Mathematik und Physik},
  publisher = {Springer Science and Business Media LLC},
  author = {Coclite,  Giuseppe Maria and De Nitti,  Nicola and Keimer,  Alexander and Pflug,  Lukas},
  year = {2022},
  month = oct 
}

@article{Coclite2022SL,
  title = {A general result on the approximation of local conservation laws by nonlocal conservation laws: The singular limit problem for exponential kernels},
  volume = {40},
  ISSN = {1873-1430},
  url = {http://dx.doi.org/10.4171/AIHPC/58},
  DOI = {10.4171/aihpc/58},
  number = {5},
  journal = {Annales de l’Institut Henri Poincaré C,  Analyse non linéaire},
  publisher = {European Mathematical Society - EMS - Publishing House GmbH},
  author = {Coclite,  Giuseppe Maria and Coron,  Jean-Michel and De Nitti,  Nicola and Keimer,  Alexander and Pflug,  Lukas},
  year = {2022},
  month = nov,
  pages = {1205–1223}
}

@article{Greenberg1959,
  title = {An Analysis of Traffic Flow},
  volume = {7},
  ISSN = {1526-5463},
  url = {http://dx.doi.org/10.1287/opre.7.1.79},
  DOI = {10.1287/opre.7.1.79},
  number = {1},
  journal = {Operations Research},
  publisher = {Institute for Operations Research and the Management Sciences (INFORMS)},
  author = {Greenberg,  Harold},
  year = {1959},
  month = feb,
  pages = {79–85}
}

@article{Lions1994,
  title = {Kinetic formulation of the isentropic gas dynamics and $p$-systems},
  volume = {163},
  ISSN = {1432-0916},
  url = {http://dx.doi.org/10.1007/BF02102014},
  DOI = {10.1007/bf02102014},
  number = {2},
  journal = {Communications in Mathematical Physics},
  publisher = {Springer Science and Business Media LLC},
  author = {Lions,  P. L. and Perthame,  B. and Tadmor,  E.},
  year = {1994},
  month = jul,
  pages = {415–431}
}

@article{Berthelin2003,
  title = {Weak solutions for a hyperbolic system with unilateral constraint and mass loss},
  volume = {20},
  ISSN = {1873-1430},
  url = {http://dx.doi.org/10.1016/S0294-1449(03)00012-X},
  DOI = {10.1016/s0294-1449(03)00012-x},
  number = {6},
  journal = {Annales de l’Institut Henri Poincaré C,  Analyse non linéaire},
  publisher = {European Mathematical Society - EMS - Publishing House GmbH},
  author = {Berthelin,  F and Bouchut,  F},
  year = {2003},
  month = dec,
  pages = {975–997}
}

@article{Berthelin2003split,
  title = {Numerical flux-splitting for a class of hyperbolic systems with unilateral constraint},
  volume = {37},
  ISSN = {1290-3841},
  url = {http://dx.doi.org/10.1051/m2an:2003038},
  DOI = {10.1051/m2an:2003038},
  number = {3},
  journal = {ESAIM: Mathematical Modelling and Numerical Analysis},
  publisher = {EDP Sciences},
  author = {Berthelin,  Florent},
  year = {2003},
  month = may,
  pages = {479–494}
}

@article{oleinik,
author={O.A.~Oleinik},
title={Discontinuous solutions of non-linear differential equations},
journal={ Uspekhi Mat. Nauk},
year={1957},
volume={12},
issue={3(75)},
pages={ 3--73},
doi={http://mi.mathnet.ru/eng/rm7611}
}

@article{Friedrich2024,
  title = {Conservation Laws with Nonlocal Velocity: The Singular Limit Problem},
  volume = {84},
  ISSN = {1095-712X},
  url = {http://dx.doi.org/10.1137/22M1530471},
  DOI = {10.1137/22m1530471},
  number = {2},
  journal = {SIAM Journal on Applied Mathematics},
  publisher = {Society for Industrial & Applied Mathematics (SIAM)},
  author = {Friedrich,  Jan and G\"{o}ttlich,  Simone and Keimer,  Alexander and Pflug,  Lukas},
  year = {2024},
  month = mar,
  pages = {497–522}
}

@misc{colombo2023overview,
  doi = {10.48550/ARXIV.2311.14528},
  author = {Colombo,  Maria and Crippa,  Gianluca and Marconi,  Elio and Spinolo,  Laura V.},
  title = {An overview on the local limit of non-local conservation laws,  and a new proof of a compactness estimate},
  publisher = {arXiv},
  year = {2023},
  copyright = {Creative Commons Attribution 4.0 International}
}

@article{Colombo2019,
  title = {On the Singular Local Limit for Conservation Laws with Nonlocal Fluxes},
  volume = {233},
  ISSN = {1432-0673},
  url = {http://dx.doi.org/10.1007/s00205-019-01375-8},
  DOI = {10.1007/s00205-019-01375-8},
  number = {3},
  journal = {Archive for Rational Mechanics and Analysis},
  publisher = {Springer Science and Business Media LLC},
  author = {Colombo,  Maria and Crippa,  Gianluca and Spinolo,  Laura V.},
  year = {2019},
  month = mar,
  pages = {1131–1167}
}

@article{Crippa2021,
  title = {Local limit of nonlocal traffic models: Convergence results and total variation blow-up},
  volume = {38},
  ISSN = {1873-1430},
  url = {http://dx.doi.org/10.1016/j.anihpc.2020.12.002},
  DOI = {10.1016/j.anihpc.2020.12.002},
  number = {5},
  journal = {Annales de l’Institut Henri Poincaré C,  Analyse non linéaire},
  publisher = {European Mathematical Society - EMS - Publishing House GmbH},
  author = {Crippa,  Gianluca and Marconi,  Elio and Spinolo,  Laura V. and Colombo,  Maria},
  year = {2021},
  month = oct,
  pages = {1653–1666}
}

@article{Keimer2023disc,
  title = {On the singular limit problem for a discontinuous nonlocal conservation law},
  volume = {237},
  ISSN = {0362-546X},
  url = {http://dx.doi.org/10.1016/j.na.2023.113381},
  DOI = {10.1016/j.na.2023.113381},
  journal = {Nonlinear Analysis},
  publisher = {Elsevier BV},
  author = {Keimer,  Alexander and Pflug,  Lukas},
  year = {2023},
  month = dec,
  pages = {113381}
}

@article{Colombo2023,
  title = {Nonlocal Traffic Models with General Kernels: Singular Limit,  Entropy Admissibility,  and Convergence Rate},
  volume = {247},
  ISSN = {1432-0673},
  url = {http://dx.doi.org/10.1007/s00205-023-01845-0},
  DOI = {10.1007/s00205-023-01845-0},
  number = {2},
  journal = {Archive for Rational Mechanics and Analysis},
  publisher = {Springer Science and Business Media LLC},
  author = {Colombo,  Maria and Crippa,  Gianluca and Marconi,  Elio and Spinolo,  Laura V.},
  year = {2023},
  month = feb 
}

@article{Keimer2025,
  title = {On the singular limit problem for nonlocal conservation laws: A general approximation result for kernels with fixed support},
  volume = {547},
  ISSN = {0022-247X},
  url = {http://dx.doi.org/10.1016/j.jmaa.2025.129307},
  DOI = {10.1016/j.jmaa.2025.129307},
  number = {2},
  journal = {Journal of Mathematical Analysis and Applications},
  publisher = {Elsevier BV},
  author = {Keimer,  Alexander and Pflug,  Lukas},
  year = {2025},
  month = jul,
  pages = {129307}
}

@misc{dn25,
  doi = {10.48550/ARXIV.2511.15631},
  author = {Coclite,  Giuseppe Maria and De Nitti,  Nicola and Huang,  Kuang},
  title = {Singular limit for a class of nonlocal conservation laws via compensated compactness},
  publisher = {arXiv},
  year = {2025},
  copyright = {Creative Commons Attribution 4.0 International}
}

@article{Dumitrescu2014,
  title = {Optimal Stopping for Dynamic Risk Measures with Jumps and Obstacle Problems},
  volume = {167},
  ISSN = {1573-2878},
  url = {http://dx.doi.org/10.1007/s10957-014-0635-2},
  DOI = {10.1007/s10957-014-0635-2},
  number = {1},
  journal = {Journal of Optimization Theory and Applications},
  publisher = {Springer Science and Business Media LLC},
  author = {Dumitrescu,  Roxana and Quenez,  Marie-Claire and Sulem,  Agnès},
  year = {2014},
  month = aug,
  pages = {219–242}
}

@article{Rodriguez-Aros2016-em,
  title = {Mathematical justification of an elastic elliptic membrane obstacle problem},
  volume = {345},
  ISSN = {1873-7234},
  url = {http://dx.doi.org/10.1016/j.crme.2016.10.014},
  DOI = {10.1016/j.crme.2016.10.014},
  number = {2},
  journal = {Comptes Rendus. Mécanique},
  publisher = {MathDoc/Centre Mersenne},
  author = {Rodriguez-Arós, Angel},
  year = {2016},
  month = Nov,
  pages = {153–157}
}

@article{Perninge2025,
  title = {Optimal stopping of BSDEs with constrained jumps and related double obstacle PDEs},
  volume = {32},
  ISSN = {1420-9004},
  url = {http://dx.doi.org/10.1007/s00030-025-01026-w},
  DOI = {10.1007/s00030-025-01026-w},
  number = {2},
  journal = {Nonlinear Differential Equations and Applications NoDEA},
  publisher = {Springer Science and Business Media LLC},
  author = {Perninge,  Magnus},
  year = {2025},
  month = jan 
}

@article{Andreianov2016,
  title = {A second-order model for vehicular traffics with local point constraints on the flow},
  volume = {26},
  ISSN = {1793-6314},
  url = {http://dx.doi.org/10.1142/S0218202516500172},
  DOI = {10.1142/s0218202516500172},
  number = {04},
  journal = {Mathematical Models and Methods in Applied Sciences},
  publisher = {World Scientific Pub Co Pte Ltd},
  author = {Andreianov,  Boris and Donadello,  Carlotta and Rosini,  Massimiliano Daniele},
  year = {2016},
  month = feb,
  pages = {751–802}
}

@article{Benyahia2017,
  title = {A macroscopic traffic model with phase transitions and local point constraints on the flow},
  volume = {12},
  ISSN = {1556-181X},
  url = {http://dx.doi.org/10.3934/nhm.2017013},
  DOI = {10.3934/nhm.2017013},
  number = {2},
  journal = {Networks \&amp; Heterogeneous Media},
  publisher = {American Institute of Mathematical Sciences (AIMS)},
  author = {Benyahia,  Mohamed and D. Rosini,  Massimiliano},
  year = {2017},
  pages = {297–317}
}

@article{Marcellini2020,
  title = {The {Riemann} problem for a Two-Phase model for road traffic with fixed or moving constraints},
  volume = {17},
  ISSN = {1551-0018},
  url = {http://dx.doi.org/10.3934/mbe.2020062},
  DOI = {10.3934/mbe.2020062},
  number = {2},
  journal = {Mathematical Biosciences and Engineering},
  publisher = {American Institute of Mathematical Sciences (AIMS)},
  author = {Marcellini,  Francesca},
  year = {2020},
  pages = {1218–1232}
}

@article{Lions1967,
  title = {Variational inequalities},
  volume = {20},
  ISSN = {1097-0312},
  url = {http://dx.doi.org/10.1002/cpa.3160200302},
  DOI = {10.1002/cpa.3160200302},
  number = {3},
  journal = {Communications on Pure and Applied Mathematics},
  publisher = {Wiley},
  author = {Lions,  J. L. and Stampacchia,  G.},
  year = {1967},
  month = aug,
  pages = {493–519}
}

@article{Caffarelli1977,
  title = {The regularity of free boundaries in higher dimensions},
  volume = {139},
  ISSN = {0001-5962},
  url = {http://dx.doi.org/10.1007/BF02392236},
  DOI = {10.1007/bf02392236},
  number = {0},
  journal = {Acta Mathematica},
  publisher = {International Press of Boston},
  author = {Caffarelli,  Luis A.},
  year = {1977},
  pages = {155–184}
}

@misc{keimerliverani26,
  doi = {10.48550/ARXIV.2604.08904},
  author = {Gong,  Xiaoqian and Keimer,  Alexander and Liverani,  Lorenzo and Matin,  Hossein Nick Zinat},
  title = {Existence and uniqueness of nonlocal nonlinear conservation laws via fixed-point methods},
  publisher = {arXiv},
  year = {2026},
  copyright = {Creative Commons Attribution Non Commercial No Derivatives 4.0 International}
}

@misc{decourcel2025,
  doi = {10.48550/ARXIV.2512.13535},
  author = {de Courcel,  Antonin Chodron},
  title = {Well-posedness of multidimensional nonlocal conservation laws with nonlinear mobility and bounded force},
  publisher = {arXiv},
  year = {2025},
  copyright = {Creative Commons Attribution 4.0 International}
}

@article{Bulek2017,
  title = {On unified theory for scalar conservation laws with fluxes and sources discontinuous with respect to the unknown},
  volume = {262},
  ISSN = {0022-0396},
  url = {http://dx.doi.org/10.1016/j.jde.2016.09.020},
  DOI = {10.1016/j.jde.2016.09.020},
  number = {1},
  journal = {Journal of Differential Equations},
  publisher = {Elsevier BV},
  author = {Bulíček,  Miroslav and Gwiazda,  Piotr and Świerczewska-Gwiazda,  Agnieszka},
  year = {2017},
  month = Jan,
  pages = {313–364}
}

@article{BULEK2011,
  title = {On scalar hyperbolic conservation law with a discontinuous flux},
  volume = {21},
  ISSN = {1793-6314},
  url = {http://dx.doi.org/10.1142/S021820251100499X},
  DOI = {10.1142/s021820251100499x},
  number = {01},
  journal = {Mathematical Models and Methods in Applied Sciences},
  publisher = {World Scientific Pub Co Pte Ltd},
  author = {Bulíček,  Miroslav and Gwiazda,  Piotr and Málek,  Josef and Świerczewska-Gwiazda,  Agnieszka},
  year = {2011},
  month = Jan,
  pages = {89–113}
}

@article{Kloeden2016,
  title = {Nonlocal multi-scale traffic flow models: analysis beyond vector spaces},
  volume = {6},
  ISSN = {1664-3615},
  url = {http://dx.doi.org/10.1007/s13373-016-0090-5},
  DOI = {10.1007/s13373-016-0090-5},
  number = {3},
  journal = {Bulletin of Mathematical Sciences},
  publisher = {World Scientific Pub Co Pte Ltd},
  author = {Kloeden,  Peter E. and Lorenz,  Thomas},
  year = {2016},
  month = Aug,
  pages = {453}
  }

@misc{keimer2026bothsides,
  doi = {10.48550/ARXIV.2605.00635},
  author = {Keimer,  Alexander and Pflug,  Lukas},
  title = {Nonlocal Approximation Principle for Entropy Solutions of Scalar Conservation Laws},
  publisher = {arXiv},
  year = {2026},
  copyright = {Creative Commons Attribution 4.0 International}
}
\end{document}